\newtheorem{theorem}{Theorem}[section]
\newtheorem{corollary}{Corollary}
\newtheorem*{main}{Main Theorem}
\newtheorem{lemma}[theorem]{Lemma}
\newtheorem{proposition}{Proposition}
\theoremstyle{definition}
\newtheorem{definition}[theorem]{Definition}
\newtheorem{remark}{Remark}
\newtheorem*{example}{Example}
\newcommand\A{\mathcal A}
\newcommand\F{\mathcal F}
\renewcommand\L{\mathcal L}
\newcommand\N{\mathbb N}
\newcommand\Z{\mathbb Z}
\newcommand\az{\A^\Z}
\newcommand\azd{\A^{\Z^d}}
\newcommand\s\sigma
\newcommand\h{h_{\mathrm{top}}}
\newcommand\Q{\mathbb Q}
\title{Effect of quantified irreducibility on\\ the computability of subshift entropy}
\author{Silvère Gangloff and Benjamin Hellouin de Menibus\footnote{The second author was supported by Basal PFB-03 CMM, Universidad de Chile, and did this work in part at in part in the Departamento de Matématicas, Universidad Andrés Bello, Republica 220, Santiago, Chile and Centro de Modelamiento Matematico, Beauchef 851, Santiago, Chile.}
}
\begin{document}
\maketitle

\centerline{\scshape Silvère Gangloff}
\medskip
{\footnotesize
 \centerline{Institut de Mathématiques de Toulouse,}
   \centerline{Université Paul Sabatier Toulouse 3,}
   \centerline{118 route de Narbonne, Toulouse, France.}
   \centerline{silvere.gangloff@math.univ-toulouse.fr}
} 

\medskip

\centerline{\scshape Benjamin Hellouin de Menibus$^*$}
\medskip
{\footnotesize
 \centerline{Laboratoire de Recherche en Informatique}
   \centerline{Université Paris-Sud - CNRS - CentraleSupelec, Université Paris-Saclay}
   \centerline{France}
   \centerline{hellouin@lri.fr}
}

\bigskip


\begin{abstract}
We study the difficulty of computing topological entropy of subshifts subjected to mixing restrictions. This problem is well-studied for multidimensional subshifts of finite type: there 	exists a threshold in the irreducibility rate where the difficulty jumps from computable to uncomputable, but its location is an open problem. In this paper, we establish the location of this threshold for a more general class, subshifts with decidable languages, in any dimension.

\textit{Keywords:} Entropy, Tilings, Symbolic Dynamics, Subshift, Computability.
\end{abstract}

\section{Introduction}

Topological entropy is a real parameter widely used in the study of dynamical systems as a conjugacy invariant and a measure of dynamical complexity. The problem of effectively computing topological entropy -- that is to say, given a description of a dynamical system and $\varepsilon > 0$, computing its topological entropy with a maximum error of $\varepsilon$ -- has been considered for many systems. These efforts, leading to positive as well as negative answers, have been documented by Milnor in 2002 \cite{Milnor}.

For instance, entropy is not computable in general for cellular automata \cite{Hurd}, Turing machines \cite{DelvenneBlondel}, iterated piecewise affine maps on the interval $[0,1]$ \cite{Koiran}, smooth mappings in dimension $\ge 2$ and smooth diffeomorphisms in dimension $\geq 3$ \cite{Misiu}, etc. However, entropy is computable for positively expansive cellular automata \cite{Damico}, one-tape Turing machines \cite{Jeandel} and piecewise monotonic maps of the interval in some circumstances \cite{MilnorTresser}. More examples can be found in \cite{Milnor}. In many cases, these works characterized the class of real numbers that can appear as entropy of a system in the studied class (see e.g. \cite{HochmanMeyerovitch, Hochman, Guillon}).

The case of one dimensional subshifts of finite type (SFT) is well understood. Entropy is known to be computable through a simple method based on computing the largest eigenvalue of a graph associated with the subshift. Furthermore this method characterizes the numbers which are entropy of such a system by an algebraic condition: namely, they are exactly the non-negative rational multiples of logarithms of Perron numbers \cite{Lind}. 

The case of higher-dimensional SFT remained open for a long time, with many specific examples being studied and solved approximately or exactly using \emph{ad hoc} methods, especially by the statistical physics community; see \cite{Lieb, Engel, pavlov2} among many others.
A negative answer came much later in the seminal work of \cite{HochmanMeyerovitch}, where the authors proved that 
entropy of a multidimensional SFT is not computable in general, and that numbers realizable as entropy of a multidimensional SFT are characterized by a computability condition: all real numbers that are $\Pi_1$-computable (i.e. upper-semi-computable). 

In both settings, various authors studied the effect of dynamical restrictions, particularly mixing properties, on the difficulty of 
computing entropy. While the situation for mixing one-dimensional SFT is unchanged \cite{Lind}, entropy becomes computable for higher-dimensional SFT with strong mixing properties \cite{HochmanMeyerovitch}. For the particular case of two-dimensional SFT, Pavlov and Schraudner proved that entropy is even exptime-computable under a different type of mixing condition (block-gluing) \cite{Pavlov}, with a partial characterization. It seems natural in this context to introduce a notion of irreducibility rate that corresponds to the strength of the mixing restriction. We prove that entropy is computable when the irreducibility rate is below a certain level, but we are unable to locate the threshold marking the difficulty jump between computable and uncomputable cases.\bigskip

In this article, we consider subshifts that are not necessarily of finite type but that can be described by an algorithm in some sense (\textbf{decidable} subshifts), hoping that results on this class will provide insights for the finite type case. In general entropy of these subshifts is not computable \cite{Simonsen} and all $\Pi_1$-computable numbers can be realised as entropy \cite{Hertling}, but entropy becomes computable under strong mixing conditions \cite{Spandl}. This is very similar to the situation for multidimensional SFT.

In this more general context, we are able to characterize precisely the location of the threshold in the irreducibility rate marking the difficulty jump between the computable and uncomputable cases. More precisely, our new results are the following:

\begin{main}[Theorem~\ref{thm:EntropieCalculable}, Theorem~\ref{thm:EntropieCalculable2} and Theorem~\ref{thm:main}]~
Let $f : \N \rightarrow \N$ be a nondecreasing function.
\begin{enumerate}
\item  If $\sum_n \frac{f(n)}{n^2}$ converges at a computable rate, there exists an algorithm that computes the entropy of $f$-irreducible decidable subshifts.

\item If $\sum_n\frac{f(n)}{n^2} = +\infty$, the set of numbers that appear as the entropy of a decidable $f$-irreducible subshift are exactly the $\Pi_1$-computable numbers, and there exists no algorithm that computes the entropy of these systems.
\end{enumerate}
\end{main}

Missing definitions, in particular the notion of convergence of a series with computable rate, appear futher in this text. The first result applies, e.g., to any $O\left(\frac{n}{\log_2^{1+\alpha}}\right)$ function. Both results apply to one-dimensional and multidimensional decidable subshifts. The state of the art and new results are summed up in Table~1. \bigskip

\begin{table}[htp]
{\renewcommand{\arraystretch}{1.2}
  \begin{tabular}{|c||c|c|c|c|c|}
    \hline Subshift class & \multicolumn{4}{c|}{\begin{tabular}{@{}c@{}}Mixing properties\\\hspace{0.3cm}None \hspace{1.5cm}Weak\hspace{1.3cm} Strong \hspace{1cm} Very strong\end{tabular}}\\
    \hline
    \hline SFT & $\Pi_1$-comp. \cite{HochmanMeyerovitch}&?&computable $\dagger$& computable \cite{HochmanMeyerovitch} \\
    $d\geq 2$&all $\Pi_1$ reals \cite{HochmanMeyerovitch} &?&?&partial char. \cite{Pavlov}\\
    \hline Decidable& $\Pi_1$-comp. \cite{Simonsen} & $\Pi_1$-comp. $\dagger$& computable $\dagger$&computable \cite{Spandl}\\
    $d\geq 1$& all $\Pi_1$ reals \cite{Hertling} & all $\Pi_1$ reals $\dagger$&?&?\\
    \hline
  \end{tabular}
  }
  \caption{(First line) \label{table} Computational difficulty of computing the entropy; (Second line) Set of possible entropies. ``Weak'' and ``Strong'' mixing stand for irreducibility rates above or below the threshold, respectively; ``Very strong'' stands for constant irreducibility rates, or similar properties. ``$\Pi_1$-comp.'' means that the problem is $\Pi_1$-computable, but not computable; ``$\Pi_1$ reals'' stands for the set of $\Pi_1$-computable reals; $\dagger$ symbols indicate the contribution of the present article.}
\end{table}

Results in this text are of two kinds. On the one hand, we describe explicit algorithms to approximate the entropy in some cases, providing a computational upper bound on the difficulty of the problem. On the other hand, given a class of real numbers defined by their computational complexity, we build a family of subshifts whose entropy take all values in this class. This proves that computing the entropy is at least as hard as computing all numbers from this class, giving a computational lower bound.

\section{Definitions}

\subsection{Subshifts}

Let $\A$ be a finite set called alphabet. We call symbols the elements of the alphabet. 

Let $\mathbb{U} \subset \Z$ a finite subset of $\Z$. A \textbf{pattern} on the alphabet $\A$ and support $\mathbb{U}$ is some element of $\A^{\mathbb{U}}$. Denote $\A^\times$ the set of patterns on $\A$, that is, $\A^\times = \bigcup_{\substack{\mathbb{U}\subset \Z}}\A^{\mathbb{U}}$, 
the union is over the finite subsets $\mathbb{U}$. A pattern of support $[0,n-1]^d$ is an \textbf{$n$-block}. When $d=1$, we call $n$-block \textbf{words} and denote $\A^\ast$ the set of all words.

The set $\azd$ is the $d$-dimensional \textbf{full shift}. We often omit the dimension when a result is valid for all $d$. An element of $\azd$ is also called a \textbf{configuration}.

The \textbf{cylinder} associated to a pattern $u\in\A^\mathbb{U}$ and position $\vec{i}\in\Z^d$ is defined as
\[[u]_i = \{x\in\azd\ :\ x_{i+\mathbb{U}} = u\}\]

We say that a pattern $u\in\A^{\mathbb{U}}$ \textbf{appears} in a configuration $x\in\azd$ when $x \in [u]_{\vec{i}}$ for some $\vec{i} \in \Z^d$. Similarly, a pattern $v\in\A^{\mathbb{V}}$ appears in (or is a subpattern of) another pattern $u\in\A^{\mathbb{U}}$ when $V\subset \mathbb{U}$ and $u|_{\mathbb{V}} = v$.
For a pattern $u\in\A^{\mathbb{U}}$, and a symbol $a\in\A$, we denote the number of symbols $a$ appearing in the pattern $u$ as follows: 
\[\#_aw = \#\{i\in \mathbb{U}\ :\ w_i=a\}.\]

Denote $\vec{e}^1, \dots, \vec{e}^d$ the canonical set of generators of $\Z^d$. For $i \in\{1, \dots, d\}$, we call the $i$th \textbf{shift} function the function $\s_i: \azd\to\azd$ such that  
\[\s_i(x)_j = x_{j+e_i}\qquad \text{for all }x \in \az\text{ and }\vec{j} \in\Z^d.\]
 Shifts define an action of $\Z^d$ on the full shift denoted $\s$ (the shift action).

The $d$-dimensional full shift $\A^{\Z^d}$ endowed with the product of the discrete topology is a topological space. A \textbf{subshift} is a closed sub-space of the full shift which is stable under the action of the shift functions defined above. 

The \textbf{language} of a subshift $\Sigma$, denoted $\L(\Sigma)$, is the set of patterns that appear in some configuration of $\Sigma$. Formally, 
 \[\L_{\mathbb{U}}(\Sigma) = \{u\in\A^{\mathbb{U}}\, :\, \exists x\in\Sigma,\ x\in[u]_0\}\qquad \L(\Sigma) = \bigcup_{\substack{\mathbb U\subset \Z\\\text{finite}}}\L_{\mathbb{U}}(\Sigma).\]

Any subshift $\Sigma$ can be defined by a set of forbidden patterns, meaning there exists a set $\mathcal{F}\subset\A^\times$ such that 
\[\Sigma = \{x\in\azd\ :\ \forall u \in \mathcal{F},\ \forall \vec{j} \in \Z^d,\ x\notin[u]_{\vec{j}}.\}\]

For example, one can choose $\F_\Sigma = \L(\Sigma) ^c$, but this choice is not unique.
A subshift is of \textbf{finite type} (denoted SFT) when it can be defined by a finite set of forbidden patterns.

A pattern $w$ is said \textbf{locally admissible} for $\Sigma$ if no patterns of $\mathcal{F}$ appears in $w$. By opposition, we say $w$ is \textbf{globally admissible} if $w\in\L(\Sigma)$. Being locally admissible depends on the choice of $\mathcal{F}$, but we omit the set when some result applies to any choice of $\F$.

We denote $d$ the distance on $\Z^d$ defined for all $\vec{i},\vec{j}$ by $d(\vec{i},\vec{j}) = \max_{1\leq k\leq d} |\vec{i}_k-\vec{j}_k|$.

\subsection{Entropy}
 
Let $d \geq 1$, and $\Sigma$ a $d$-dimensional subshift.
The couple $(\Sigma,\s)$ is a dynamical system. 

The \textbf{entropy} of $\Sigma$ is the following number: 

\[\h(\Sigma) = \lim_{n\to\infty}\frac{\log_2(\#\L_n(\Sigma))}{n^d} = \inf_{n\to\infty}\frac{\log_2(\#\L_n(\Sigma))}{n^d}\]

The second equality is a well-known result. See \cite{LindMarcus}, Chapter 4, for a proof and more information about this notion.

In this formula, and in the remainder of the paper, the logarithm is in base two. \bigskip

For $0<\varepsilon<1$, the \textbf{binary entropy} of $\varepsilon$ is given by $H(\varepsilon) = -\varepsilon\log(\varepsilon)-(1-\varepsilon)\log(1-\varepsilon)$. Despite its name, it is not related to subshift entropy, but to the (information-theoretical) entropy of a Bernoulli process and happens to be useful in our proofs. Notice that $H(\varepsilon)\to 0$ as $\varepsilon \to 0$.

\subsection{Irreducibility with intensity function}

The \textbf{diameter} of a finite subset $\mathbb{U}$ of $\Z^d$ is $\delta(\mathbb{U}) = \max_{\substack{i\in \mathbb{U}\\j\in \mathbb{U}}}d(i,j)$, and the distance between two finite subsets $\mathbb{U},\mathbb{V}$ of $\Z^d$ is defined as $d(\mathbb{U},\mathbb{V}) = \min_{\substack{i\in \mathbb{U}\\j\in \mathbb{V}}}d(i,j)$. There is no ambiguity with the distance on $\Z^d$ since we use different notations for subsets and elements.

\begin{definition}[Irreducibility, block-gluing] \label{def.irr1}
Let $f : \N\to\N$ be a function, and $\Sigma$ a subshift.
$\Sigma$ is said to be \textbf{$f$-irreducible} when for all $\mathbb{U},\mathbb{V}$ finite subsets of $\Z^d$, all $u\in\L_{\mathbb{U}}(\Sigma)$, and all $v\in\L_{\mathbb{V}}(\Sigma)$ such that 
\[d(\mathbb{U}, \mathbb{V}) \ge f(\max(\delta(\mathbb{U}),\delta(\mathbb{V}))),\]
there exists a configuration $x \in \Sigma$ such that 
$x_{\mathbb{U}} = u$ and $x_{\mathbb{V}} =v$.

If the previous definition is true only when $\mathbb U$ and $\mathbb V$ are $n$-blocks, $\Sigma$ is said to be \textbf{$f$-block gluing}.
\end{definition}

\begin{remark} \label{remark.growing}
If a subshift is $f$-irreducible (resp. block-gluing), and $g \geq f$, then the subshift is also $g$-irreducible (resp. block-gluing).
\end{remark}

\begin{definition} \label{def.irr2}
  Let $f : \N\to\N$ be a function. A subshift $\Sigma$ is said to be $O(f)$-irreducible if there exists $g\in O(f)$ such that $\Sigma$ is $g$-irreducible.

\end{definition}

\begin{remark}
  This notion is related to and extends various mixing properties that appear in other texts.

  \begin{itemize}
  \item $\Sigma$ is strongly irreducible (also called the specification property) iff it is $O(1)$-irreducible;
  \item $\Sigma$ is topologically mixing iff there exists some $f$ such that $\Sigma$ is $f$-irreducible;
  \item $\Sigma$ is block-gluing iff it is $O(1)$-block-gluing.
  \end{itemize}

\end{remark}

\begin{definition} A subset $\mathbb{U}\subset\Z$ is
connected when:
\[\forall i\le j \le k \in\mathbb{U},\ i,k \in \mathbb{U} \Rightarrow j \in \mathbb{U}.\]

For a subset $\mathbb U\subset Z$, we denote $\gamma(\mathbb{U})$ the smallest connected subset containing $\mathbb U$ (in other words, its convex hull).
\end{definition}
\begin{proposition} \label{prop:equivalencegluing}
  Let $f:\N\to\N$. A $\Z$-subshift is $f$-block gluing if and only if it is $f$-irreducible.
\end{proposition}
\begin{proof}
It is clear that $f$-irreducibility implies $f$-block gluing.
We prove the converse.
First, let us prove some property of couples of subsets of $\Z$. \bigskip

Let $\mathbb{U}, \mathbb{V}$ be two non-empty subsets of $\Z$
such that $d(\mathbb{U}, \mathbb{V}) > 0$.
Let $(\mathbb U_i)_{1\leq i\leq k}$ be the maximal connected components of $\mathbb U$ that satisfy $\gamma(\mathbb U_i)\cap \mathbb V = \emptyset$, and define $(\mathbb V_i)_{1\leq i\leq l}$ similarly. Fixing $\mathbb U'_i = \gamma(\mathbb U_i)$, $\mathbb U' = \bigcup_i\mathbb U'_i$, and $\mathbb V'_i$ and $\mathbb V'$ similarly, it is easy to check that:
\begin{enumerate}
\item $\mathbb{U} \subset \mathbb{U}'$ and $\mathbb{V} \subset \mathbb{V}'$,
\item $\delta (\mathbb{U}', \mathbb{V}') = \delta (\mathbb{U}, \mathbb{V})$,
\item $|l-k|\leq 1$ and one of the following statements is true:
\begin{itemize}
  \item $\forall i,\ \mathbb{U}'_i \le \mathbb{V}'_{i} \le \mathbb{U}'_{i+1}$ (when defined)
  \item $\forall i,\ \mathbb{V}'_i \le \mathbb{U}'_{i} \le \mathbb{V}'_{i+1}$ (when defined).
\end{itemize}
\end{enumerate}

Let $f:\N\to\N$ and $\Sigma$ be some $f$-block gluing one-dimensional subshift.
Consider $\mathbb{U},\mathbb{V}$ subsets of $\Z$ such that  
\[d(\mathbb{U},\mathbb{V}) \ge f(\max(\delta(\mathbb{U}),
\delta(\mathbb{V}))),\]
and $u\in\L_\mathbb{U}(\Sigma), v\in\L_\mathbb{V}(\Sigma)$. Since $u$ and $v$ are globally admissible there exist $u'\in\L_{\mathbb U'}(\Sigma), v\in\L_{\mathbb V'}(\Sigma)$ such that $u'|_U=u$
and $v'|_U=v$. Without loss of generality assume $l=k-1$ and that $\mathbb{U}'_i \le \mathbb{V}'_{i} \le \mathbb{U}'_{i+1}$ for all $i$.


Denote $u^i = u'|_{\mathbb U_i}$, $v^i = v'|_{\mathbb V_i}$ and $n= \max \left( \delta(\mathbb{U}),\delta(\mathbb{V})\right)$.
Then $d (\mathbb{U}'_1, \mathbb{V}'_1) \ge f(n)$. Since $\mathbb{U}'_1\subset \mathbb U'$ and $\mathbb{V}'_1\subset \mathbb V'$, we have $n \ge \max (\delta (\mathbb{U}'_1) , \delta (\mathbb{V}'_1))$. Since $\Sigma$ is $f$-block gluing, there exists $w\in\L_{\gamma (\mathbb{U}'_1 \cup \mathbb{V}'_1)}$ such that $w|_{\mathbb{U}'_1} = u^1$ and $w|_{\mathbb{V}'_1} = v^1$.\bigskip

Iterating the same argument, we have  $n\geq \max(\delta (\gamma (\mathbb{U}'_1 \cup \mathbb{V}'_1)), \delta (\mathbb{U}'_2))$ and $d(\gamma (\mathbb{U}'_1 \cup \mathbb{V}'_1), \mathbb{U}'_2)\geq f(n)$. By $f$-block gluing, there exists $w'\in\L_{\gamma ( \mathbb{U}'_1 \cup \mathbb{V}'_1 \cup \mathbb{U}'_2)}$ such that $w'|_{\mathbb{U}'_1} = u^1$, $w'|_{\mathbb{V}'_1} = v^1$ and $w'|_{\mathbb{U}'_2} = u^2$.\bigskip

Through an easy induction, we see that there exists $w\in\L_{\gamma (\mathbb{U}'\cup\mathbb{V}')}$ such that $w|_{\mathbb U'}=u'$ and $w|_{\mathbb V'}=v'$.
\end{proof}

\subsection{Computability}

Turing machines are a model of computation that is believed to capture the informal notion of computation (Church-Turing thesis). Although it is not needed in this text, we give the formal definition of a Turing machine for completeness.

A Turing machine consists in a head attached with a symbol representing its internal state, 
which is able to move over and read/write on an infinite tape filled with symbols, and to change its internal state each 
time it goes over a symbol according to a transition function.

Formally, a Turing machine is a tuple $(\mathcal{A},\mathcal{Q},q_0,q_h,\#,\delta)$
where $\mathcal{A}$ is a finite set (the tape alphabet), 
$\mathcal{Q}$ is a finite set (the states alphabet), $q_0 \in \mathcal{Q}$ 
is the initial state, and $q_h$ the halting state, $\#\in\A$ is a special blank tape symbol and $\delta: \mathcal{A} \times \mathcal{Q} \rightarrow \mathcal{A} \times \mathcal{Q} \times \{\rightarrow,\leftarrow\}$ is called the transition function.
When a machine is in state $q$ and reads a symbol $a$ at its current position, $\delta(q,a)$ specifies the new tape symbol being written, the new state of the head and the direction in which the head moves at this step.

Initially, the tape contains a finite word from $\A^\ast$ (the input) surrounded by blank symbols, the head is on position $0$ with state $q_0$. The evolution of the machine is then determined by the rules described above. If the machine enters the halting state $q_h$ (the machine halts), the output is the finite non-blank portion of the tape at this point.\bigskip

A function $\A^\ast \to \A^\ast$ is said to be \textbf{computable} if there exists a Turing machine that, given as input $u\in\A^\ast$, eventually halts and outputs $f(u)$. This notion extends to functions $\N\to\N$ through a binary encoding, and to other countable input and output sets by appropriate encodings. We present a notion of computability for real numbers and subshifts as needed in this text.

\paragraph*{Real numbers:} 

\begin{definition} \label{def.comp.real}
A real number $\alpha$ is said to be
\begin{itemize}
\item {\bf{computable}} if there exists a computable function $\varphi_\alpha : \N \to \Q$ such that for all $n$,
\[|\alpha-\varphi_\alpha(n)|\leq 2^{-n}.\]
\item {\bf{$\Pi_1$-computable}} (or upper-semi-computable) 
if there exists a computable function $\varphi_\alpha : \N \to \Q$ such that for all $n$ 
\[\alpha = \inf_{n\in\N} \varphi_\alpha(n).\]
\end{itemize}
\end{definition}

\begin{remark}
In the definition for a $\Pi_1$-computable number given above, we also have 
\[\alpha = \inf_{n \in \N} \varphi_\alpha'(n)\]
where $\varphi_\alpha'(n)=\inf_{k \leq n} \varphi_\alpha(k)$ for all $n$. $\varphi_\alpha'$ is also computable and is nonincreasing, which means that the function $\varphi_{\alpha}$ in the definition can be chosen nonincreasing.
\end{remark} \bigskip

\begin{remark}
A computable real number is in particular $\Pi_1$-computable. However, there exist $\Pi_1$-computable numbers 
that are not computable. For instance, take the real number 
\[\sum_{k=0}^{+\infty} \varepsilon_k 2^{-k},\] 
where $\varepsilon_k=0$ if the $k$th Turing machine stops on the empty input and $\varepsilon_k=1$ otherwise. This number is $\Pi_1$-computable but not computable: otherwise, the halting problem would be decidable.
\end{remark}

\paragraph*{Subshifts:} 

General subshifts do not admit a description by a finite sequence of symbols, so we need to restrict to a countable subclass: decidable subshifts, defined below.

\begin{definition}
  A subshift $\Sigma$ is {\bf{decidable}} if $\L(\Sigma)$ is decidable; that is, the function
  \[\begin{array}{ccl}\A^\times &\to& \{0,1\}\\
    u&\mapsto& \left\{\begin{array}{ll} 1 & \text{if }u\in\L(\Sigma)\\0&\text{otherwise,}\end{array}\right.
    \end{array}\]
is a computable function.  
\end{definition}

\subsection{Irreducibility and decidability for subshifts of finite type}

It is a well-known fact that multidimensional SFT ($d\geq 2$) are not decidable in general; this is sometimes referred to as the undecidability of the extension problem. However, some dynamical properties (minimality, block gluing, etc.) make subshifts of finite type decidable.

\begin{proposition}\label{prop:MixSFT}
Let $\Sigma$ a $d$-dimensional subshift of finite type on some alphabet $\A$.
If it is $f$-irreducible with $f$ a computable function 
such that $f(n)=o(n)$, then it is decidable.
\end{proposition}

The remainder of this section is dedicated to a proof of this statement. It is a slight generalization from the proof of Corollary 3.5 in \cite{HochmanMeyerovitch}, which proved the same result for $O(1)$-irreducible subshifts. \bigskip

Let $\Sigma$ a $d$-dimensional SFT for some $d \ge 1$ defined by a set of forbidden patterns $\F$. We assume without loss of generality that elements of $\F$ are $r$-blocks for some $r>0$ (by replacing every pattern $u$ in $\F$ by the set of $r$-blocks that contains it, assuming $r\geq \delta(u)$).\bigskip

We denote $C_n = \llbracket -n,n \rrbracket ^d$
and  $D_{m} = C_{m+r} \backslash C_n.$ for all $n < m$ integers.

\begin{lemma} \label{lem.irr1}
  Let $u \in\A^{C_n}$. $u\notin\L(\Sigma)$ if, and only if, there exists some $N>n$ such that for every $p^N\in\A^{C_N}$ locally admissible for $\Sigma$, $p^N|_{C_n} \neq u$.
\end{lemma}

In other words, a pattern $u$ is not globally admissible if and only if there is a larger window such that the pattern cannot be extended to this window without creating a forbidden pattern.

\begin{proof}
Let $u \in \A^{C_n}$.
\begin{description}
\item[$(\Rightarrow)$] If $u$ appears in some configuration $x\in\Sigma$, $p^N = x|_{C_N}$ verifies the second assertion. 
\item[$(\Leftarrow)$] The reciprocal uses a compacity argument. Assume that $u$ verifies the second assertion. Fix $\gamma^0=u$. Since $\A_{C_{n+1}}$ is a finite set, extract a subsequence $(p^{N_1(k)})_k$ from $(p^N)_N$ such that $p^{N_1(k)}|_{C_{n+1}}$ is constant; denote this value $\gamma^1$. Now iterate this process, extracting for all $i$ a subsequence $(p^{N_{i+1}(k)})_k$ from $(p^{N_i(k)})_k$ such that $p^{N_{i+1}(k)}|_{C_{n+i+1}}$ is constant, denoted $\gamma^{i+1}$.

  We constructed a sequence of locally admissible patterns $\gamma^k \in \A^{C_{n+k}}$ such that $\gamma^0=u$ and $\gamma^{k+1}|_{C_{n+k}} = \gamma_k$ for all $k$. Now define a configuration $x$ where $x_{\vec{i}}$ is the common value of $\gamma^k_{\vec{i}}$ for all $k$ large enough. Since all $\gamma_k$ are locally admissible, it follows that $x\in\Sigma$. Because $u$ appears in $x$, it follows that $u\in\L(\Sigma)$.
\end{description} \end{proof}

\begin{lemma} \label{lem.irr2}
Assume that $\Sigma$ is $o(n)$-irreducible.
A pattern $u \in\A^{C_n}$ is globally admissible for $\Sigma$ if and only if 
there exists some integers $N_0$ and $m$ such that 
\begin{itemize}
\item $N_0\ge m \ge n$;
\item for all $N\ge N_0$ and every $v\in \A^{D_{m}}$ such that $w|_{D_m}=v$ for some locally admissible pattern $w\in\A^{C_N}$, there exists a locally admissible pattern $w' \in \A^{C_N}$ such that $w'|_{C_n} = u$ and $w'|_{D_m} = v$.
\end{itemize}
\end{lemma}

\begin{proof}
  If $u$ verifies the last condition, it appears in particular inside some pattern on $C_N$ for an arbitrarily large $N$, so it is globally admissible by the same argument as in the proof of Lemma~\ref{lem.irr1}.

Let us prove the direct implication. Take $f$ a computable function such that $f(k)=o(k)$ and $\Sigma$ is $f$-irreducible, and assume that $u \in \L(\Sigma)$.

For all $k \ge n$, $C_n$ and $D_k$ have both 
diameter at most $2k+2r$ and $d(C_n,D_k) = k-n$. Let $m$ be the smallest integer such that $m-n \ge f(2m+2r)$. 
From the $f$-irreducibility property of $\Sigma$, we have 
that for any globally admissible $v\in\A^{D_{m}}$, there exists a configuration $x \in \Sigma$ such that $x|_{D_{m}} = v$ and $x|_{C_n}=u$. 

By Lemma~\ref{lem.irr1}, for every pattern $v \in \A^{D_{m}}$ that is not globally admissible, there exists some $N_v$ such that for all $N \ge N_v$, there is no locally admissible pattern $w\in C_N$ such that $w|_{D_{m}}=v$. Fix $N_0 = \max_{v \in \A^{D_{m}}} N_v$. It follows that for all $N \ge N_0$, all the locally admissible patterns $w\in\A^{C_{N}}$ satisfy $w|_{D_{m}} \in \L(\Sigma)$.

Consider any locally admissible pattern $w \in \A^{C_N}$. As noted above, $w|_{D_{m}} \in \L(\Sigma)$, so by $f$-irreducibility there exists $x\in\L_{C_{m+r}}(\Sigma)$ such that $x|_{C_n} = u$ and $x|_{D_{m}} = w_{D_{m}}$. Now define $w'$ by putting $w'|_{C_{m+r}}=x$ and $w'|_{C_N \backslash C_{m+r}}=w|_{C_N \backslash C_{m+r}}$. We prove that $w'$ is locally admissible.

Remember forbidden patterns of $\Sigma$ are $r$-blocks. A translated $r$-block is either entirely included in $C_{m+r}$ or in ${C_N \backslash C_{m}}$. But $w'|_{C_{m+r}} = x$ and $w'|_{{C_N \backslash C_{m}}} = w|_{{C_N \backslash C_{m}}}$, and both being locally admissible, this ends the proof.

This process is illustrated in Figure~\ref{fig.lem.irr2}. 
\begin{center}
\begin{figure}[htp]
\centering
\begin{tikzpicture}[scale=0.25]
\begin{scope}
\fill[gray!40] (0,0) rectangle (10,10);
\draw (0,0) rectangle (10,10);
\node at (5,-1) {$w\in\A^{C_N}$ (loc. adm.)};
\draw[dashed] (2,2) rectangle (8,8); 
\draw[dashed] (3,3) rectangle (7,7);
\node at (2.5,2.5) {$v$};

\draw[-latex] (12,5) -- (15,5);
\end{scope}
\begin{scope}[xshift=15cm]
\fill[gray!40] (2,2) rectangle (8,8);
\fill[gray!20] (3,3) rectangle (7,7);
\fill[gray!40] (4,4) rectangle (6,6);

\draw (2,2) rectangle (8,8);
\draw (3,3) rectangle (7,7);
\draw (4,4) rectangle (6,6);
\node at (5,5) {$u$};
\node at (2.5,2.5) {$v$};
\draw[-latex] (11,5) -- (14,5);
\end{scope}

\begin{scope}[xshift=30cm]
\fill[gray!40] (0,0) rectangle (10,10);
\fill[gray!20] (3,3) rectangle (7,7);
\fill[gray!40] (4,4) rectangle (6,6);
\draw[dashed] (2,2) rectangle (8,8);
\draw (3,3) rectangle (7,7);
\draw (4,4) rectangle (6,6);
\draw (0,0) rectangle (10,10) ;
\node at (5,5) {$u$};
\node at (2.5,2.5){$v$};
\node at (5,-1) {$w'\in\A^{C_N}$ (loc. adm.)};
\end{scope}

\end{tikzpicture}
\caption{Every pattern $v$ on $D_m$ appearing in some locally admissible pattern of $\A^{C_N}$ appears jointly with $u$ in some other locally admissible pattern.}
\label{fig.lem.irr2}
\end{figure}
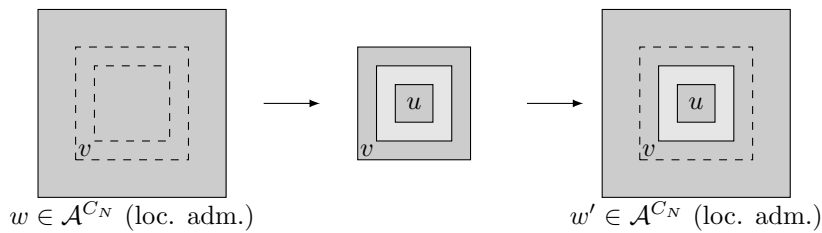
\end{center}
\end{proof}

Using the previous lemmas, we finish the proof of Proposition~\ref{prop:MixSFT}.\bigskip

Take as input a pattern $u$ and assume without loss of generality that $u\in\A^{C_n}$. Do the following in parallel:
  \begin{enumerate}
  \item For every integer $N$, check whether Lemma~\ref{lem.irr1} applies; if this is the case, output $u\notin\L(\Sigma)$.
  \item For every integers $m$ and $N_0$, check whether Lemma~\ref{lem.irr2} applies; if this is the case, output $u\in\L(\Sigma)$.
    \end{enumerate}
  
By Lemma~\ref{lem.irr1} and Lemma~\ref{lem.irr2}, exactly one of these processes will stop and output the correct answer. This means that $\Sigma$ is decidable.

\section{Computability of the entropy of subshifts}

From now on we consider decidable subshifts. 
This section is organized as follows: 
\begin{itemize}
\item In Section~\ref{sec.upper.bound} we present the state of the art for computing entropy in SFT and decidable subshifts.
\item in Section~\ref{sec.comp} we prove that when $f$ satisfies some summability condition, then the entropy of $f$-irreducible decidable subshifts is computable.
\item In Section~\ref{sec.uncomp}, we prove that when $f$ does not satisfy this condition, every $\Pi_1$-computable number can be realised as the entropy of an $f$-irreducible decidable subshift, which implies that the entropy of this class of subshifts is uncomputable in general.
\end{itemize}

Hence this summability condition defines some kind of threshold that delimits the computable and uncomputable cases.

\subsection{\label{sec.upper.bound} State of the art}

In this section, we present known results that are either folklore or appeared in the litterature.

\begin{definition} \label{def.algo}
  Let $d \ge 1$ and $\mathcal{C}$ a class of $d$-dimensional decidable subshifts. We say there is an algorithm that computes the entropy of $\mathcal C$ when there is a Turing machine taking as input some integers $(n,m)$ and outputs some rational number $r_{n,m}$ such that:
\[\left|\h(\Sigma) - r_{n,m} \right|\leq 2^{-m}.\]
where $\Sigma$ is the decidable subshift defined by the $n$th Turing machine.\bigskip

We say that the algorithm upper semi-computes the entropy of $\mathcal C$ when instead
\[ \h (\Sigma) = \inf_m r_{n,m}.\]
\end{definition}

\begin{remark}
  In Definition~\ref{def.algo}, if the number given as input to the algorithm does not correspond to a Turing machine that decides the language of a $d$-dimensional decidable subshift, the behaviour of the algorithm is unspecified.

  Intuitively, there is a uniform way to compute the entropy for this class of subshifts, but the validity of the input cannot be checked.
\end{remark}

\begin{proposition}\label{Prop:ObstructionGenerale}
There is an algorithm that upper-semi-computes the entropy of
decidable $d$-dimensional subshifts. In particular, entropies of decidable $d$-dimensional subshifts are $\Pi_1$-computable real numbers.
\end{proposition}

This result also holds for $d$-dimensional SFT. The following result states that this upper bound is tight.

\begin{proposition}[\cite{Hertling}, Theorem 22]\label{Prop:RealisationGenerale}
Let $\alpha$ be a $\Pi_1$-computable number. 
Then there exists a decidable subshift $\Sigma$ such that $\h(\Sigma) = \alpha$.
\end{proposition}

A similar result for $d$-dimensional SFT, $d>1$, was obtained in \cite{HochmanMeyerovitch}. This proposition can also be obtained as a corollary of Theorem~\ref{thm:main}.

The situation improves when considering subshifts with mixing properties:

\begin{theorem}[Theorem 1.3 in \cite{HochmanMeyerovitch}] The entropy of a $O(1)$-irreducible $d$-dimensional SFT is a computable real number.
\end{theorem}

\begin{remark}
This phenomenon was already observed in \cite{Spandl} (Theorem 6.11).
\end{remark}

This last result suggests that irreducibility relates to the computability of the entropy in SFT. It is natural to ask for which rates of irreducibility the entropy of SFT is computable or uncomputable. We consider in the rest of this paper a larger class of subshifts, decidable subshifts, and determine a threshold that delimits the irreducibility rates for which the entropy is computable of uncomputable.

\subsection{\label{sec.comp} Under the threshold}

In this section, we prove that when the irreducibility rate $f$ satisfies some summability condition, the entropy of the class of $f$-irreducible decidable subshifts is computable. These results apply to $f$-irreducible SFT as well. 

\begin{definition} Let $(a_n)_n$ be a sequence of 
non-negative numbers. The series $\sum a_n$ is said to converge at a computable rate when there is a computable function $n : \N \rightarrow \N$ (the rate) such that for all $t \in \N$, 
\[\left| \sum_{n \ge n(t)} a_n \right| \le 2^{-t}.\]
\end{definition}

\begin{lemma} \label{lemma.comp} Let $(a_n)_n$ and $(b_n)_n$ two series of non-negative integers, 
such that for all $n$, $a_n \le b_n$. If the series $\sum b_n$ converges at a computable rate, 
then the series $\sum a_n$ also converges at computable rate.
\end{lemma}

\begin{theorem}\label{thm:EntropieCalculable}
Let $f$ be a non-decreasing computable function such that the series $\sum \frac{f(n)}{n^2}$ converges at a computable rate. 
Then there is an algorithm that computes the entropy of $f$-block gluing one-dimensional decidable subshifts.
\end{theorem}

\begin{remark}
In particular, values of the entropy for subshifts in this class are computable real numbers. In this theorem, $f$-block gluing can be replaced by $f$-irreducible.
\end{remark}
 
\begin{remark}
  In the statement of Theorem~\ref{thm:EntropieCalculable}, the algorithm depends on $f$. However, a careful reading of the proof shows that there exists an algorithm which takes as input positive integers $c$ and $m$ and, assuming that $m$ represents a Turing machine that decides the language of some $c$-block gluing $\Z$-subshift, outputs the entropy of this subshift.
\end{remark}

\begin{proof} Let $m$ be some integer such that the $m$th Turing machine decides the language of a one-dimensional $f$-block gluing subshift $\Sigma \subset \az$.
By definition of the $f$-block gluing property, for any two words $u,v\in\L_n(\Sigma)$, 
there exists a word $w\in\A^{f(n)}$ such that $uwv\in\L_{2n+f(n)}(\Sigma)$. Because $u,v$ 
can be chosen freely we have 
 \[\#\L_n^2(\Sigma) \leq \#\L_{2n+f(n)}(\Sigma) \leq |\A|^{f(n)}\cdot \#\L_{2n}(\Sigma),\]
the second inequality coming from the fact that any globally admissible word of length $2n+f(n)$ can be decomposed into a globally admissible word of length $2n$ and some word of length $f(n)$ for a crude upper bound. By taking the logarithm of this inequality and dividing by 
$2n$, we get: 

\[\frac{\log_2 \#\L_n(\Sigma)}{n} \leq \frac{\log_2 \#\L_{2n}(\Sigma)}{2n}
+ \log_2 (|\A|) \frac {f(n)}{2n}\]

The previous inequality being true from any $n$, we apply it iteratively on 
the sequence $(2^{n+k})_{k \ge 0}$, and combining the $l \ge 1$ first inequalities 
we get: 
\begin{equation}\frac{\log_2 \#\L_{2^n}(\Sigma)}{n} \leq \frac{\log_2 \#\L_{2^{n+l}}(\Sigma)}{2^{n+l}}
+ \log_2 (|\A|) \sum_{k=1}^{l} \frac {f(2^{n+k})}{2^{n+k}}\label{eq:entropyapprox}\end{equation}

Let us study the rightmost series. Since $f$ is non-decreasing, we have that for all $k$,
  \[\frac{f(2^k)}{2^{k}} \leq \frac{1}{2^{2k}}\sum_{i=2^k}^{2^{k+1}-1}f(i) \leq \frac{1}{4} 
	\sum_{i=2^k}^{2^{k+1}-1} \frac{f(i)}{i^2}\]
        The first inequality comes from the fact that $f(2^k)\leq f(i)$ for $i\geq 2^k$, and the second inequality we use that $i \leq 2^{k+1}$.

Using Lemma~\ref{lemma.comp}, and the theorem hypothesis, the 
series $\sum_{k\geq n}\frac{f(2^k)}{2^{k}}$ converges at a computable rate. \bigskip

Since $\sum_{k\geq n}\frac{f(2^k)}{2^{k}}$ converges at a computable rate, there exists a computable function  
$t \mapsto n(t)$ such that for all $t$, 
\[ \sum_{k=n(t)+1}^\infty \frac{f(2^{k})}{2^{k}} \le \frac{2^{-t-1}}{\log_2 (|\A|)}\]
This implies that 
\[\left|\h(\Sigma)-\frac{\log_2\#\L_{2^{n(t)}}(\Sigma)}{2^{n(t)}}\right|\leq 2^{-t-1}.\]

To conclude, the algorithm to approximate $\h(\Sigma)$ up to a precision $2^{-t}$ 
with input $(m,t)$ runs as follows:
\begin{enumerate} 
\item compute $n(t)$, 
\item then count all words of $\L_{2^n(t)}(\Sigma)$ (this is possible because $\Sigma$ is decidable), 
\item then compute a rational approximation of $\frac{\log_2\#\L_{2^n(t)}(\Sigma)}{2^{n(t)}}$, 
using an approximation of the logarithm function up to a precision $2^{-t-1}$, 
which is in turn a rational approximation of $\h(\Sigma)$ up to precision $2^{-t}$.
\end{enumerate}

We conclude that the entropy $\h(\Sigma)$ is uniformly computable.
\end{proof}

We now extend this proof to the case of $d$-dimensional subshifts, for $d \ge 2$.

\begin{theorem}\label{thm:EntropieCalculable2}
Let $d \ge 1$ and $f$ be a non-decreasing computable function such the series $\sum\frac{f(n)}{n^2}$ converges at a computable rate. Then there is an algorithm that computes the entropy of $f$-block gluing $d$-dimensional decidable subshifts.
\end{theorem}

\begin{corollary}\label{cor:EntropieCalculable2}
  For a function $f$ that verifies the same conditions as Theorem~\ref{thm:EntropieCalculable2},
  \begin{enumerate}
    \item there is an algorithm that computes the entropy of $f$-irreducible $d$-dimensional decidable subshifts.
    \item if furthermore $f(n)=o(n)$, there is an algorithm that computes the entropy of $f$-block gluing (or $f$-irreducible) $d$-dimensional subshifts of finite type.
      \end{enumerate}
\end{corollary}

\begin{proof}
Let $d \ge 1$, and $m$ an integer representing a Turing machine that decides the language of some $d$-dimensional decidable subshift $\Sigma$ which is $f$-block gluing. Denote for $n_1, \dots, n_k \in \N$,
\[C[n_1,\dots, n_d]  = \llbracket 0,n_1-1 \rrbracket \times ... \times \llbracket 0,n_d-1 \rrbracket.\]
The diameter of this set is $N = \max_k n_k$. \bigskip

Fix $\mathbb U=C_{n_1,\dots, n_d}$ and $\mathbb V = \mathbb U+\vec{i}$ where $\vec{i} = (n_k+f(N))\vec{e}^k$ for some $k$. From the $f$-block gluing property of $\Sigma$, for all $u\in\L_\mathbb{U}(\Sigma)$ and $v\in\L_\mathbb{V}(\Sigma)$, there exists a pattern $w\in\L_\mathbb{W}(\Sigma)$ where $\mathbb W=C[n_1, n_2 ... n_{k-1}, 2n_k +f(N),n_{k+1}, ... n_d]$ such that $w|_\mathbb{U}=u$ and $w|_\mathbb{V}=v$.\bigskip

Therefore, taking $k=1$: 
\[\begin{array}{ccl}
\#\L_{C[n_1,\dots, n_d]}^2 & \leq & \#\L_{C[2n_1+f(N),n_2,\dots n_d]}\\
& \leq & \#\L_{C[2n_1,n_2,\dots, n_d]} \cdot |\A|^{f(N)\times n_2 \times ... 
\times n_d} \\
& \leq & \#\L_{C[2n_1, n_2 ,\dots, n_d]} \cdot |\A|^{N^{d-1} f(N)},\\
\end{array}\]

where the first inequality is by the above remark, the second inequality comes from decomposing $w$ into two patterns on $C[2n_1, n_2 , .. , n_d]$ and on $C[f(N), n_2 , .. , n_d]$, respectively, and the last is by definition of $N$.
\bigskip

We have $\delta(C[2n_1,n_2, ... , n_d])\leq 2N$, so applying the same argument: 
\[\#\L_{C[2n_1,n_2,\dots, n_d]}^2 \leq 
\#\L_{C[2n_1,2n_2, n_3 ,\dots, n_d]} \cdot |\A|^{(2N)^{d-1} f(2N)}.\]
Iterating this argument, we have by an easy induction
\begin{align*}\#\L_{C[n_1,\dots, n_d]}^{2^d} &\leq 
\#\L_{C[2n_1,\dots, 2n_d]} \cdot (|\A|^{(2N)^{d-1}f(2N)})^{d-1} ) \cdot 
|\A|^{N^{d-1} f(N)}\\&\leq (|\A|^{(2N)^{d-1}f(2N)})^{d}\end{align*}
where we used the fact that $f$ is nondecreasing. Applying this equation to $n_1 = ... = n_d=n$ for some $n$,
\begin{align*}\#\L_{C[n,\dots, n]}^{2^d} \leq& \#\L_{C[2n,\dots, 2n]} \cdot 
(|\A|^{(2n)^{d-1}f(2n)})^{d}\\
\frac{\log_2(\#\L_{C[n,\dots, n]})}{n^d} 
\leq& \frac{\log_2(\#\L_{C[2n,\dots, 2n]})}{(2n)^d} 
+ d\cdot\log_2 (|\A|)\cdot\frac{f(2n)}{2n}.\end{align*}

The end of the proof is similar to the proof of Theorem~\ref{thm:EntropieCalculable}.
\end{proof}

  \begin{example} \label{cor.2}
For any $0<\varepsilon<1$, there exists an algorithm that computes the entropy of $n/(\log_2(n+c))^{1+\varepsilon}$-block gluing decidable subshifts for any constant $c > 2^{1+\varepsilon}$.
\end{example}

$c$ is only needed so that the function is nonincreasing, which can be checked by a straightforward computation.
    
  \begin{proof}Let $\varepsilon>0$ and $f : n\mapsto n/(\log_2(n))^{1+\varepsilon}$. 
By the remark that follows Definition~\ref{def.irr1}, any $f$-irreducible subshift is also $g$-irreducible, with $g : n \mapsto n/(\log_2(n))^{1+r}$, for some rational number $r<\varepsilon$.

We now prove that the series
\[\sum_k \frac{g(k)}{k^2} = \sum_{k} \frac{1}{k\log_2(k)^{1+r}}\]
converges at a computable convergence rate, so that the same result follows for $f$. We use the following inequality: for all $n$, 
\[\sum_{k \geq n} \frac{1}{k\log_2(k)^{1+r}} \leq \int_{n-1}^{\infty} 
\frac{dt}{t\log_2(t)^{1+r}} = \frac{1}{r\log_2(n-1)^r},\]
using the fact that $t \mapsto 1/t\log_2(t)^{1+r}$ is non-decreasing.

Thus the series converges with rate \[n : t \mapsto \left\lceil \exp\left(\left(\frac {2^t}r\right)^{1/r}\right)+1\right\rceil,\]
which is a computable function. Indeed, with this function 
  \[\sum_{k \ge n(t)} \frac{g(k)}{k^2}\le\frac{1}{r\log_2(n(t)-1)^r} \le 2^{-t}\]
We conclude using Theorem~\ref{thm:EntropieCalculable}.
\end{proof}

\subsection{\label{sec.uncomp} Above the threshold}

In this section, we consider decidable subshifts whose irreducibility rate is above the threshold, and prove the following theorem: 

\begin{theorem}\label{thm:main}
  Let $\alpha > 0$ be a $\Pi_1$-computable real number and $f : \N \to\N$ be a computable non-decreasing function such that 
\[\sum_{n=1}^\infty \frac{f(n)}{n^2} = +\infty.\]

Then there exists a decidable $f$-irreducible one dimensional subshift with entropy $\alpha$.
\end{theorem}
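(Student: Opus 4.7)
The plan is to realize $\alpha$ as the entropy of a decidable $f$-irreducible subshift $\Sigma$ by exploiting the $\Pi_1$-computability of $\alpha$, which gives a computable non-increasing $\varphi:\N\to\Q$ with $\varphi(k)\searrow\alpha$. The construction will use a multi-scale \emph{anchor and free cell} architecture over an alphabet $\A=\{0,1,\ast\}$. Along a computable fast-growing scale sequence $(n_k)$, I would fix a computable set of level-$k$ anchor positions $P_k\subset\Z$ with consecutive gaps comparable to $n_k$; every configuration of $\Sigma$ is forced to carry the symbol $\ast$ at each position of $P:=\bigcup_k P_k$, and between two consecutive level-$k$ anchors the content must belong to a computable library $L_k\subseteq\{0,1\}^{n_k}$ of cardinality at most $\lceil 2^{n_k\varphi(k)}\rceil$. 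Since $\varphi$ is non-increasing, the libraries are defined by enumerating $\varphi$ and removing (say, lexicographically largest) words whenever the approximation drops: words are only ever deleted, never added, and the deletion schedule is itself computable, so $\L(\Sigma)$ is uniformly decidable in $k$.

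First I would pin down $n_k$, the anchor spacing, and the $L_k$'s so that the three required properties hold simultaneously. Decidability is immediate: given $u\in\A^m$, locate the anchor positions intersecting $u$ (computable in $m$) and verify membership of each segment in the corresponding $L_k$. The entropy computation is bilateral: the cardinality cap $|L_k|\leq 2^{n_k\varphi(k)}$ yields $\h(\Sigma)\leq\lim_k\varphi(k)=\alpha$, while the fact that the library choices across free cells are independent gives the matching lower bound $\h(\Sigma)\geq\alpha$; the hypothesis $f(n+1)-f(n)\in\{0,1\}$ is used here to ensure that the anchor geometry is regular enough for the two bounds to meet. For $f$-irreducibility, any two patterns at distance larger than $f(\mathrm{diam})$ can be shown to be separated by at least one complete level-$k$ free cell for some $k$, and the gap can always be filled by any element of $L_k$ compatible with the $\ast$-anchors bounding it.

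The main obstacle will be the joint calibration of the three scales --- the $n_k$'s, the anchor spacing, and the rate at which the libraries shrink --- so that decidability, the entropy equality, and $f$-irreducibility all survive. This is exactly where the hypothesis $\sum_n f(n)/n^2=+\infty$ becomes essential: the Cauchy-condensation bound
\[\frac{\log\#\L_{n_k}(\Sigma)}{n_k}\leq \h(\Sigma)+\sum_{i\geq k}\frac{f(2^i)}{2^{i+1}}\]
from the proof of Proposition~\ref{Prop:EntropieCalculable} no longer yields a computable rate of convergence, which leaves enough slack in the finite-scale counts for $\h(\Sigma)$ to realize any prescribed $\Pi_1$-computable $\alpha>0$. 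The generalization to $\Z^d$ proceeds by replacing intervals with $d$-dimensional cubes and $1$-dimensional anchors with $(d-1)$-dimensional anchor slabs, leaving the qualitative structure of the argument unchanged.
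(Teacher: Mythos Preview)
Your construction has a structural problem at the outset: you force the symbol $\ast$ to appear at a fixed computable set $P\subset\Z$ of positions. A subshift must be shift-invariant, and if $x\in\Sigma$ has $x_j=\ast$ for all $j\in P$, then $\s(x)$ carries $\ast$ on $P-1\neq P$, so $\s(x)\notin\Sigma$. The set you describe is therefore not a subshift at all. If instead you allow anchors at any translate of $P$, then ``locate the anchor positions intersecting $u$ (computable in $m$)'' is no longer well-defined from $u$ alone, and both your decidability argument and the independence-of-free-cells entropy lower bound collapse.

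Even setting shift-invariance aside, the proposal never connects the construction to $f$. You do not say how the scales $n_k$ or the anchor spacing depend on $f$, nor why the resulting system should be $f$-irreducible rather than irreducible at some unrelated rate; the sentence ``separated by at least one complete level-$k$ free cell'' is an assertion, not an argument. The divergence hypothesis $\sum f(n)/n^2=+\infty$ appears only as the \emph{absence} of an obstruction (the bound from Proposition~\ref{Prop:EntropieCalculable} no longer gives a computable rate), but it must do positive work in the construction. In the paper it is exactly what forces $p_n/n\to 0$ under the counterfactual assumption $\h>\alpha$, yielding the contradiction; your sketch has no analogue of this step, and the claim that ``the two bounds meet'' for the entropy is left unjustified.

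For comparison, the paper works on $\{0,1\}$ with \emph{frequency subshifts} $\Sigma_p$ (forbid any length-$n$ word containing more than $p_n$ ones) and builds $p$ by an on-line algorithm: at stage $n$ it tests a computable entropy condition against $\alpha_n$ together with a mixing condition $p_{F(n)}\geq 2p_{n+1}$, where $F(n)=2n+f(n)$, and accordingly freezes or increments $p$ on the interval $(F(n),F(n+1)]$. Irreducibility is then a one-line gluing with a block of zeroes, and $\h(\Sigma_p)=\alpha$ follows from a quantitative contradiction that uses the divergence of $\sum f(n)/n^2$ via a telescoping estimate on $p_{F^k(n)}/F^k(n)$. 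The higher-dimensional case is obtained simply by stacking the one-dimensional subshift along one coordinate axis.
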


First we prove this theorem for $d=1$ using \textbf{bounded density subshifts}, introduced by Stanley~\cite{Stanley} (as bounded density \textit{shifts}). We introduce these objects in Section~\ref{sec.bounded.density.shifts} and study their properties in the rest of the section, the actual proof consisting in a construction presented in Section~\ref{sec.algo}. 
The result is then extended to any $d \ge 2$ in Section~\ref{sec.dimsup}.

In all this section, $f$ is a function which verifies the conditions of Theorem~\ref{thm:main}, and we denote $F : n \mapsto 2n+f(n)$.

\subsubsection{\label{sec.bounded.density.shifts} Bounded density subshifts}

\begin{definition}[Bounded density subshift] \label{def.bounded.density.shift}
Let $p = (p_n)_{n\ge 1}$ be a sequence of positive integers. The bounded density subshift associated to $p$, denoted $\Sigma_p\subset \{0,1\}^{\Z}$, is the one-dimensional subshift 
defined by the set of forbidden words ${\mathcal{F}}^p = \bigcup_{n \ge 1} {\mathcal{F}}^p_n$, where 
for all $n$:
\[{\mathcal{F}}^p_n = \left\{ u\in {\mathcal{A}}^{n}\ :\ \#_1u> p_n \right\}.\]
\end{definition}

\begin{example}

\begin{enumerate}
\item The bounded density subshift $\Sigma_p$ defined by the sequence 
$p =(1,1,2,3,4,...)$ is the Golden mean shift $\Sigma\subset\{0,1\}^\Z$ defined by the set of forbidden words $\mathcal{F} = \{11\}$.

  Indeed,
  \begin{itemize}
  \item $11\notin\L_2(\Sigma_p)$ since $\#_111>p_2$. Therefore $\Sigma_p\subset \Sigma$.
  \item For $n>1$, any word $w\in\L_n(\Sigma)$ must contain at least a zero (or it would contain $11$), so that $\#_1w\leq p_n$ and $w$ is not a forbidden pattern for $\Sigma_p$. Therefore $\Sigma\subset\Sigma_p$.
  \end{itemize}

Equivalently, the sequence $p' = (1,1,2,2,3,3,4,4, ...)$ defines the same subshift. Indeed,
\begin{itemize}
  \item because $p' \le p$, we have $\Sigma_{p'} \subset \Sigma_p$. 
  \item Reciprocally, take $w \in \L_n(\Sigma_p)$. Since $11$ is forbidden, every $1$ in $w$ is followed by a zero, except if it is the last symbol. It follows that $2\#_1w-1\leq n$, and therefore $\#_1w\leq \lceil n/2 \rceil = p'_n$, which means that $w\in\L_n(\Sigma_{p'})$. We conclude that $\Sigma_{p} \subset \Sigma_{p'}$.
\end{itemize}

\item For $n\in\N$, define $\Sigma_n$ the bounded density subshift associated to the sequence $p^n$ defined as follows: 
\begin{itemize}
\item if the $n$-th Turing machine starting on an empty input stops before computing $k$ steps, then $p^n_k = p_{k-1}$;
\item otherwise, $p^n_k = p_{k-1}+1$. 
\end{itemize}
If the $n$-th Turing machine never stops on the empty input, then $p_n = n$ for all $n$ and $\Sigma_n = \{0,1\}^\Z$ the full shift, with entropy $1$. On the other hand, if the $n$-th Turing machine stops at some point, then $p^n$ is ultimately 
constant and $\Sigma_n$ has entropy zero by Lemma~\ref{lem:slopetozero}.
\end{enumerate}
\end{example}

\subsubsection{Bounded density subshifts are decidable}

To use bounded density subshifts in the proof of Theorem~\ref{thm:main}, we prove sufficient conditions for them to be decidable in Lemma~\ref{lem:freqDecidable}. Lemma~\ref{lem:extendbyzero} is used in the proof of Lemma~\ref{lem:freqDecidable}, and is used again throughout the section.

\begin{lemma} \label{lem:extendbyzero}
Let $p$ be a non-decreasing sequence of integers, $\F_p$ the set of forbidden patterns defined in Definition~\ref{def.bounded.density.shift}, and $w$ a word.

If $w\notin \F_p$, then $0^m w 0^n\notin \F_p$ for all integers $m,n$.

If $w$ is locally admissible for $\Sigma_p$, then $0^m w 0^n$ is globally admissible for $\Sigma_p$ for all integers $n,m$.
\end{lemma}

\begin{proof}
The first statement follows from the fact that $\#_10^nw0^m = \#_1w$.

For the second statement, the element $x$ of $\{0,1\}^{\Z}$ such that 
\begin{itemize}
\item $x_{\llbracket 0, |w|-1 \rrbracket} = w$
\item and for all $i \in \Z \backslash \llbracket 0, |w|-1 \rrbracket$, $x_{i} = 0$. 
\end{itemize}
belongs to $\Sigma_p$. 
Indeed, any finite word appearing in $x$ is amongst the following types: 
\begin{enumerate}
\item $0^k u$, where $k \ge 0$ and $u$ is some prefix of $w$,
\item $v 0 ^k$, where $k \ge 0$ and $v$ is some suffix of $w$,
\item $0^m w 0^n$, where $n, m \ge 0$.
\end{enumerate}
In each case, $u, v$ and $w$ cannot be forbidden patterns since $w$ is locally admissible. By the first statement, none of these words can be a forbidden pattern. As a consequence, for all $n,m \ge 0$, $0^m w 0^n$ is globally admissible.
\end{proof}

\begin{lemma}\label{lem:freqDecidable}
If $p$ is non-decreasing and computable, then $\Sigma_p$ is decidable.
\end{lemma}

\begin{proof} 
Since $p$ is non-decreasing, Lemma~\ref{lem:extendbyzero} implies any locally admissible pattern of $\Sigma_p$ is globally admissible. Hence the algorithm to decide if an input word $w\in\{0,1\}^l$ is in $\mathcal{L}(\Sigma_p)$ runs as follows:
  \begin{itemize}
  \item compute the value of $p_k$ for all $k\leq l$;
  \item compute the set of forbidden patterns of length $\leq l$;
  \item check if one of these patterns appears in $w$.
	\item if this is the case, return $0$ ; else, 
	return $1$. \qedhere
\end{itemize}\end{proof} 

\subsubsection{Zero limit density implies zero entropy}

In this section, we prove a technical lemma related 
to the entropy of bounded density subshifts. 
This lemma will be used in the proof 
of Theorem~\ref{thm:main} to control the entropy of the constructed subshift.

For $p$ a sequence of positive integers, 
the \textbf{limit density} of $p$ is the number $\lim \frac{p_n}{n}$ when 
it exists.

\begin{lemma}\label{lem:slopetozero}
  Let $p$ be some sequence such that $\frac{p_n}{n} \rightarrow 0$.
Then $\h(\Sigma_p)=0$.
	
In other words, if the limit density is zero, then the entropy is also zero.

This result holds under the weaker condition $\inf_n \frac{p_n}{n} = 0$.
\end{lemma}

\begin{proof}
  Consider a sequence $p$ such that $\inf_n \frac{p_n}{n} = 0$. Since for all $n$, any word of $\L_n(\Sigma_p)$ has less than $p_n$ symbols equal to $1$, we have:
\[\#\L_n(\Sigma_p)\leq \sum_{k=0}^{p_n} \binom{n}{k}.\]
Take some $n$ such that $\frac{p_n}n\leq \frac 12$. We have:
\[\sum_{k=0}^{p_n} \binom{n}{k} \leq (p_n+1) \binom{n}{p_n},\]
since $k \mapsto \binom{n}{k}$ is non-increasing on $\Bigl \llbracket 0, \Bigl \lfloor \frac{n}{2} \Bigr \rfloor \Bigr \rrbracket$. Therefore:

\[h(\Sigma_p)\leq \frac{\log\#\L_n(\Sigma)}{n}\leq\frac{\log(p_n+1)}{n} + \frac{1}{n} \log \left( 
\binom{n}{p_n} \right).\]

To estimate $\frac 1n\log \left(\binom{n}{p_n} \right)$, we use a known computation trick:

\[\binom{n}{p_n} \left(\frac {p_n}n\right)^{p_n}\left(1-\frac {p_n}n\right)^{n-{p_n}} \leq \sum_{k=0}^n{\binom{n}{k}}\left(\frac kn\right)^{k}\left(1-\frac kn\right)^{n-k} = 1.\]
Applying the logarithm function to this inequality leads to:
\[\frac{1}{n} \log \left(\binom{n}{p_n} \right) \leq -\frac {p_n}n\log\left(\frac {p_n}n\right)-\left(1-\frac {p_n}n\right)\log\left(1-\frac {p_n}n\right) = H\left(\frac{p_n}n\right),\]
This is true for all $n$, and $H(\varepsilon)\to 0$ when $\varepsilon\to 0$.
It follows that if $p_n /n$ tends towards $0$, or if $\inf_n \frac{p_n}{n} = 0$,we have $\h(\Sigma_p)=0$. 
\end{proof}

\subsubsection{Piecewise linear density functions}

In the proof of Theorem~\ref{thm:main}, we use the bounded density subshifts given by a piecewise linear density sequence $p$, as defined in Definition~\ref{def.betaf.shifts}. Remember that $F : n\mapsto 2n+f(n)$.

\begin{definition}[Piecewise linear density functions] \label{def.betaf.shifts}
Given $f : \N\to\N$ a nondecreasing function and $\beta = (\beta_k)_{k \ge 0}$ a sequence of positive rational numbers, we define a function $\varphi (f,\beta) : \mathbb{R}_{+} \rightarrow \mathbb{R}_{+}$ that is linear on intervals $[F^n(1), F^{n+1}(1)]$: for all $n \ge 1$ and $t \in [0,F^{n+1}(1)-F^n(1)]$, 
\[\varphi(f,\beta) (F^{n} (1) + t) = \varphi (f,\beta) ( F^n (1)) + t.\beta_n.\]
Then, we define $p(f,\beta) : \N \rightarrow \N$ by 
\[p(f,\beta)_n = \Bigl \lceil \varphi (f,\beta) (n)\Bigr \rceil.\]

If $\beta$ is a finite sequence, we define a corresponding infinite sequence $\beta'$ as follows:  
  \[\beta'_k = \left\{\begin{array}{ll}\beta_k&\text{if }k\leq n\\\beta_n &\text {if } k>n\end{array}\right.\]
By abuse of notation, we denote $\varphi (f,\beta) = \varphi (f, \beta')$ and $p(f,\beta)= p(f,\beta')$. The bounded density subshift obtained from a sequence $p(f,\beta)$, where $\beta$ can be finite or infinite, is denoted $\Sigma_{f,\beta}$.
\end{definition}

Figure~\ref{fig.def.betaf.shifts} illustrate schematically 
this definition. 

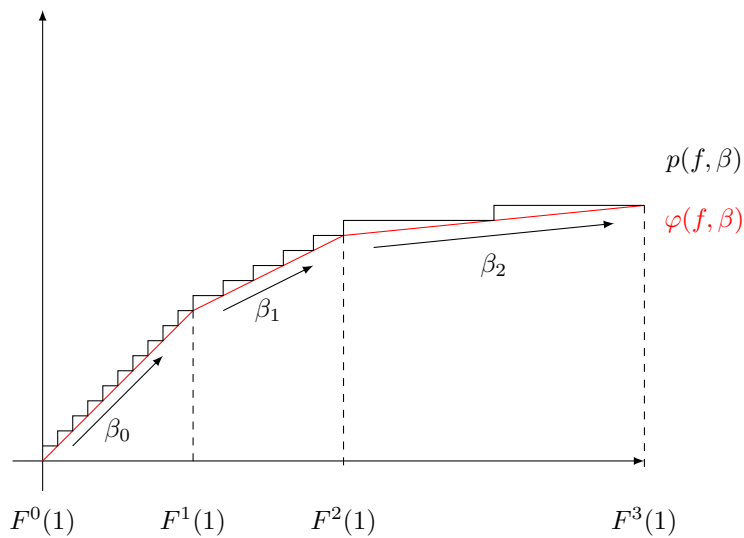
\begin{figure}[htp]
\centering
\begin{tikzpicture}[scale=0.4]

\draw[-latex] (0,-1) -- (0,15); 
\draw[-latex] (-1,0) -- (20,0); 
\draw[color=red] (0,0) -- (5,5);
\draw[color=red] (5,5) -- (10,7.5);
\draw[color=red] (10,7.5) -- (20,8.5);

\draw[dashed] (5,5) -- (5,-0.25);
\draw[dashed] (10,7.5) -- (10,-0.25);
\draw[dashed] (20,8.5) -- (20,-0.25);
\node at (0,-2) {$F^0 (1)$};
\node at (5,-2) {$F^{1} (1)$};
\node at (10,-2) {$F^{2} (1)$};
\node at (20,-2) {$F^{3} (1)$};

\draw[-latex] (1,0.5) -- (4,3.5); 
\node at (2.5,1) {$\beta_0$};
\draw[-latex] (6,5) -- (9,6.5);
\node at (7.5,5) {$\beta_1$}; 

\draw[-latex] (11,7.1) -- (19,7.9);
\node at (15,6.5) {$\beta_2$};

\draw (0,0.5) -- (0.5,0.5) -- (0.5,1) -- 
(1,1) -- (1,1.5) -- (1.5,1.5) -- (1.5,2) 
-- (2,2) -- (2,2.5) -- (2.5,2.5) -- (2.5,3) -- 
(3,3) -- (3,3.5) -- (3.5,3.5) -- (3.5,4) -- (4,4) 
-- (4,4.5) -- (4.5,4.5) -- (4.5,5) -- (5,5) ;

\draw (5,5) -- (5,5.5) -- (6,5.5) -- (6,6) -- (7,6) 
-- (7,6.5) -- (8,6.5) -- (8,7) -- (9,7) -- (9,7.5) -- (10,7.5) 
-- (10,8) -- (15,8) -- (15,8.5) -- (20,8.5);

\node at (22,10) {$p(f,\beta)$};
\node[color=red] at (22,8) {$\varphi(f,\beta)$};
\end{tikzpicture}
\caption{\label{fig.def.betaf.shifts} Illustration of Definition~\ref{def.betaf.shifts}.}
\end{figure}

\begin{lemma} \label{lem:beta.zero.density} Let $\beta$ be a non-increasing sequence of positive rational numbers such that $\beta_n \rightarrow 0$.
Then the entropy of $\Sigma_{f,\beta}$ is zero.
\end{lemma}

\begin{proof}
Since $\beta$ is non-increasing, for all $n \ge 1$ and $t \ge 0$, 
\[\varphi(f,\beta)(F^n (1) +t) \le \varphi(f,\beta)(F^n(1)) + t.\beta_n,\]
and as a consequence, 
\[p(f,\beta)_{F^n(1)+t} \le  \varphi(f,\beta)(F^n(1)) + t.\beta_n +1, \]
Hence 
\[\frac{p(f,\beta)_{F^n(1)+t}}{F^n(1)+t} \le \frac{\varphi(f,\beta)(F^n(1)) + t.\beta_n +1}{
F^n(1)+t} \underset{t\to\infty}\longrightarrow \beta_n.\]
This means that 
\[\limsup_k \frac{p_k}{k} \le \beta_n.\]
This is true for all $n$ and $\beta_n \rightarrow 0$, so $\frac{p_n}{n} \rightarrow 0$. By Lemma~\ref{lem:slopetozero}, we get $\h(\Sigma_{f,\beta})=0$.
\end{proof}

\begin{lemma}
Let $\beta$ be a computable sequence of rational numbers. Then the subshift $\Sigma_{f,\beta}$ is decidable.
\end{lemma}

\begin{proof}
This is an application of Lemma~\ref{lem:freqDecidable}. The conditions of this lemma are verified if $\beta$ is computable, 
since then $p(f,\beta)$ is computable, and always non-decreasing by definition.
\end{proof}

\subsubsection{Concaveness and irreducibility}

In the construction used in the proof of Theorem~\ref{thm:main}, we need to prove that the obtained subshift is $f$-irreducible. We show that this is the case when the sequence $p$ is concave, which is the case for a piecewise linear sequence $p(f,\beta)$ defined by a nonincreasing sequence $\beta$, and under a particular condition on some values of the sequence.

\begin{definition}
A sequence $p : \N \rightarrow \N$ is said to be \textbf{concave} when there exists a concave function $\varphi : \mathbb{R}_{+} \rightarrow \mathbb{R}_{+}$ such that for all $n \ge 1$, 
\[p_n = \Bigl \lceil \varphi (n) \Bigr \rceil.\]
\end{definition}

\begin{lemma} \label{lem:slopesinequality}
Let $p$ be a concave sequence of integers. For all integers $k,l,n \ge 0$, 
\[p_{n+k+l} - p_{n+l} \le p_{n+k} - p_n + 2.\]
\end{lemma}

\begin{proof}
Let $\varphi : \mathbb{R}_{+} \rightarrow \mathbb{R}_{+}$ such that for all $n \ge 1$, 
\[p_n = \Bigl \lceil \varphi (n) \Bigr \rceil.\]
Then for all $k,l \ge 0$ and $n \ge 1$, 

\begin{align*}
p_{n+k+l} + p_n - p_{n+l} - p_{n+k} 
 &=  \Bigl \lceil \varphi (n+k+l) \Bigr \rceil + \Bigl \lceil \varphi (n) \Bigr \rceil - \Bigl \lceil \varphi (n+l) \Bigr \rceil
- \Bigl \lceil \varphi (n+k) \Bigr \rceil \\
 &\le  \varphi (n+k+l) + \varphi (n) - \varphi (n+k) - \varphi (n+l) +2 \\ 
 &\le  2,
\end{align*}
since the function $\varphi$ is concave.
\end{proof}

\begin{lemma}\label{lem:abovemixing}
  Assume that $p$ is a concave sequence and that for 	all $n \ge 1$, $p_{F(n)}\geq 2p_{n}+4$. Then $\Sigma_p$ is $f$-irreducible.
\end{lemma}

\begin{proof}
Let $p$ be some sequence which verifies these conditions. We prove that $\Sigma_p$ is $f$-block gluing, which implies 
(Proposition~\ref{prop:equivalencegluing}) that $\Sigma_p$ is $f$-irreducible.

Consider some integer $n \ge 1$ and two words $u, v$ in $\mathcal{L} (\Sigma_p)$ with $|u|\leq n$, $|v|\leq n$. We prove that $u0^{f(n)}v \in \L(\Sigma_p)$. By Lemma~\ref{lem:extendbyzero}, 
it is sufficient to prove that this word is locally admissible.

Let $w$ be a subword of $u0^{f(n)}v$. It is amongst the following types: 
\begin{itemize}
\item $w=u' 0^{k}$, where $u'$ is a suffix of $u$ and $k \le f(n)$, 
\item $w=0^k v'$, where $v'$ is a prefix of $v$ and $k \le f(n)$, 
\item  $w=u' 0^{f(n)} v'$, where $u'$ is a suffix of $u$ and $v'$ is a prefix of $v$. 
\end{itemize}

In the first two cases, $w\in\mathcal{L} (\Sigma_p)$ by Lemma~\ref{lem:extendbyzero}.

In the third case, since $u'$ and $v'$ are in $\mathcal{L} (\Sigma_p)$, we have $\#_1 u' \leq p_{|u'|}$
and $\#_1 v' \leq p_{|v'|}$. 
Therefore $\#_1 w \leq p_{|u'|}+p_{|v'|}$. 

Since $p$ is a concave sequence, as a consequence of Lemma~\ref{lem:slopesinequality}, we have:
\begin{itemize}
\item $p_n - p_{|u'|} + 2 \geq p_{2n+f(n)}-p_{n+|u'|+f(n)}$, and
\item $p_n - p_{|v'|} + 2 \geq p_{n+|u'|+f(n)}-p_{|v'|+|u'|+f(n)}$.
\end{itemize}

Moreover, using the Lemma hypothesis, we have $p_{2n+f(n)}\geq 2p_n+4.$

Combining the above inequalities, we obtain 
\[\#_1 w = p_{|u'|}+p_{|v'|} \le p_{|v'|+|u'|+f(n)} = p_{|w|},\]
 
and therefore $w$ is not a forbidden pattern. This means that $u0^{f(n)}v$ is locally admissible for $\Sigma_p$, and globally admissible as a consequence of Lemma~\ref{lem:extendbyzero}.
\end{proof}

\subsubsection{Computability of the entropy of bounded density subshifts associated to a finite sequence}

In the construction, we will need to compute an approximation of the entropy of the subshift obtained after defining only the $n$ first terms of the sequence. This is the object of the following Lemma.

\begin{lemma}\label{lem:BoundedDecidable}
Let $f$ be a computable integer function. There exists an algorithm that, given as input a 
non-increasing finite sequence $(\beta_n)_{n\leq N}$ of positive rational numbers, computes the entropy of $\Sigma_{f,\beta}$.
\end{lemma}

\begin{proof}

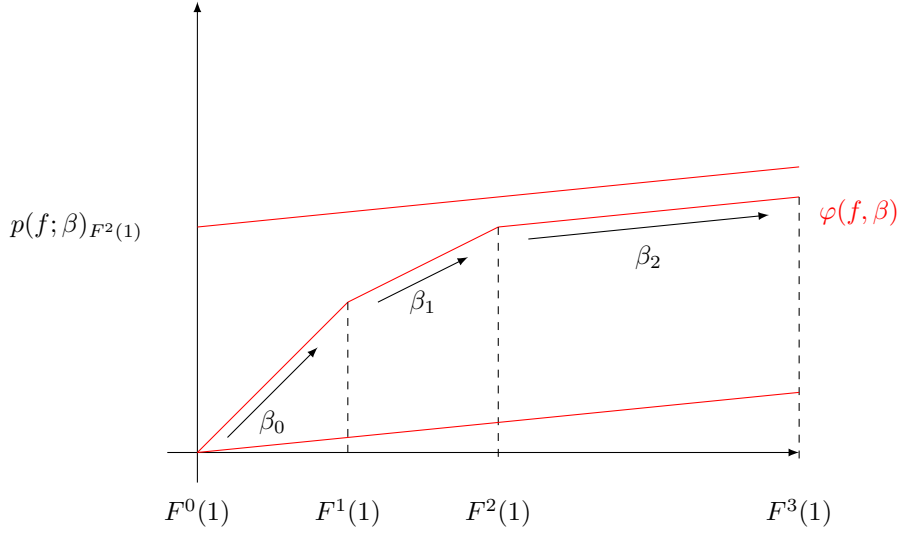
\begin{figure}[htp]
\centering
\begin{tikzpicture}[scale=0.4]

\draw[-latex] (0,-1) -- (0,15); 
\draw[-latex] (-1,0) -- (20,0); 
\draw[color=red] (0,0) -- (5,5);
\draw[color=red] (5,5) -- (10,7.5);
\draw[color=red] (10,7.5) -- (20,8.5);

\draw[dashed] (5,5) -- (5,-0.25);
\draw[dashed] (10,7.5) -- (10,-0.25);
\draw[dashed] (20,8.5) -- (20,-0.25);
\node at (0,-2) {$F^0 (1)$};
\node at (5,-2) {$F^{1} (1)$};
\node at (10,-2) {$F^{2} (1)$};
\node at (20,-2) {$F^{3} (1)$};

\draw[-latex] (1,0.5) -- (4,3.5); 
\node at (2.5,1) {$\beta_0$};
\draw[-latex] (6,5) -- (9,6.5);
\node at (7.5,5) {$\beta_1$}; 

\draw[color=red] (0,0) -- (20,2);
\draw[color=red] (0,7.5) -- (20,9.5);

\draw[-latex] (11,7.1) -- (19,7.9);
\node at (15,6.5) {$\beta_2$};
\node at (-4,7.5) {$p(f;\beta)_{F^2 (1)}$};

\node[color=red] at (22,8) {$\varphi(f,\beta)$};
\end{tikzpicture}
\caption{\label{fig:encadrementpentes} Illustration of the proof of Lemma~\ref{lem:BoundedDecidable}.}
\end{figure}

Let $\beta$ be a non-increasing sequence of non-negative rational numbers. Since $\beta$ is non-increasing, $\varphi(f,\beta)$ 
is concave and therefore $p(f,\beta)$ is concave. 
Since $\beta$ is non-increasing, 
\[\forall k\geq 1,\  p(f,\beta)_k \leq p(f,\beta)_{F^N (1)} + \Bigl \lceil \beta_N \cdot k \Bigr \rceil,\]
as illustrated schematically on Figure~\ref{fig:encadrementpentes}.

Now fix $c = \frac{F^N(1)+6}{\beta_N}.$ We have 
\begin{align*}p_{2n+c} &\geq p(f,\beta)_n+\beta_N(n+c)-1\\ &\geq 2p(f,\beta)_n - F^N(1) + \beta_Nc-2\\ &\geq 2p(f,\beta)_n+4,\end{align*}

where the first inequality uses the concavity of $p(f,\beta)$ and the second inequality uses the previous remark. By Proposition~\ref{lem:abovemixing}, $\Sigma_p$ is $c$-irreducible, and $c$ is computable from the knowledge of $\beta$ and $f$.

$p(f,\beta)$ is also computable since $f$ and $\beta'$ are computable.
It follows, by  Lemma~\ref{lem:freqDecidable}, that $\Sigma_{f,\beta}$ is decidable, and one can construct the Turing machine which decides its language from the knowledge of $\beta$.

The Lemma follows by application of Theorem~\ref{thm:EntropieCalculable2}.   
\end{proof}


\subsubsection{Controlling the change of entropy}

In this section, we give an upper bound on the decrease of entropy when defining the $(N+1)$ th term of the sequence $\beta$ when the $N$ first terms are already defined. In the construction, these conditions 
will be verified asymptotically.

\begin{lemma}\label{lem:boundentropy}
  Let $(\beta_n)_{n \leq N}$ be a finite non-increasing sequence of positive rational numbers, and $f$ a computable integer function. Denote $(\beta'_n)_{n\leq N}$ the sequence defined by 
$\beta'_N = \beta_N-\frac{1}{F^N(1)}$ and $\beta'_n=\beta_n$ for $n<N$.
  
	Then we have the following inequality: 
  \[\h(\Sigma_{f,\beta'}) \leq \h(\Sigma_{f,\beta}) \leq \h(\Sigma_{f,\beta'})
	+H\left(\frac 1{F^N(1)}\right),\]
	where $H$ is the binary entropy. 
	
\end{lemma}

\begin{proof}
The left-hand inequality comes from $\Sigma' \subset \Sigma$, 
since $\beta$ is non-increasing.

Let us prove the right-hand inequality. 
We define a family of functions ${\delta}_N : \A^\ast\to\A^\ast$ such that $\delta_N(\L_n(\Sigma))\subset \L_n(\Sigma')$ for all $n$, and evaluate the size of the pre-images of each word.

\begin{description}

\item[A remark on $p(f,\beta)$ and $p(f,\beta')$]

  We prove that $\forall n, p(f,\beta)_n\geq p(f,\beta')_n - \left\lfloor\frac n{F^N(1)}\right\rfloor$. Indeed,
  \begin{itemize}
  \item when $n\leq F^N(1)$, $p(f,\beta)_n= p(f,\beta')_n$;
  \item when $n > F^N(1)$,
    \begin{align*}p(f,\beta)_n-p(f,\beta')_n & \leq \lceil \varphi(f,\beta)_n\rceil-\lceil\varphi(f,\beta')_n\rceil\\
&\leq \left\lfloor \varphi(f,\beta)_n-\varphi(f,\beta')_n\right\rfloor + 1 \\&\leq \left\lfloor(\beta_{N} - \beta'_N)(n-F^N(1))\right\rfloor +1 \\&\leq \left\lfloor\frac n{F^N(1)}\right\rfloor,\end{align*}

     using the Lemma hypothesis, and where $\varphi$ is defined in Definition~\ref{def.betaf.shifts}.
    \end{itemize}
  
 \item[Definition of the function ${\delta}_N$]
   Take $w\in\A^n$ and define inductively:
   \begin{align*}
   \ell^N_0(w) &= \min\{k : w_k = 1\}\\\forall 0<j<\left\lfloor\frac n{F^N(1)}\right\rfloor,\ \ell^N_{j}(w) &= \min\{k > \ell^N_{j-1} : k>j F^N(1) \text{ and } w_k = 1 \}.\end{align*}

 $\ell^N_j(w)$ is undefined when the corresponding set is empty. Now define:
   \[
   \forall k,\ \delta_N (w)_{k} =\left \{
   \begin{array}{ll}0 & \text{if }\exists j,\ k=\ell^N_j(w)\\
                    w_k & \text{otherwise}.\end{array}\right.
   \]
   
Intuitively, the function marks every $F^N(1)$-th letter of the word. Going from left to right, it turns the first symbol 1 it meets after each mark. Our goal is that most words of length $n$ lose $\frac1{F^N(1)}$ symbols 1 that are well-distributed along the word. This definition is illustrated in Figure~\ref{fig:decoupagesuppression}.\bigskip

\begin{figure}[htp]
\centering
\begin{tikzpicture}[scale=0.4]

\foreach[count=\x] \n in {0,0,1,0,1,1,0,1,0, 0,0,0,0,0,0,0,0,0,0,1,0,1,1,0,1,1,1,0,0,0,0}
{\node at (.6*\x, 4.5) {$\n$};}

\foreach[count=\x] \n in {0,0,0,0,1,1,0,1,0, 0,0,0,0,0,0,0,0,0,0,0,0,0,1,0,1,1,1,0,0,0,0}
{\node at (.6*\x, -.5) {$\n$};}

\foreach \x in {3, 20, 22}
{\draw[-latex] (.6*\x, 4) -- (.6*\x, .3);
\draw[thick] (.6*\x-.3, 4.1) rectangle ++ (0.6, 0.8);
\draw[thick] (.6*\x-.3, -0.9) rectangle ++ (0.6, 0.8);}

\draw[line width = 0.4mm] (0,-0.5) -- (0,0.5) (21.6,-0.5) -- (21.6,0.5); 
\draw[line width = 0.4mm] (0,4.5) -- (0,5.5) (21.6,4.5) -- (21.6,5.5); 

\foreach \x in {1, ..., 3}
{\draw (.3+5.4*\x,-0.5) -- (.3+5.4*\x,0.5);
\draw (.3+5.4*\x,4.5) -- (.3+5.4*\x,5.5);} 

\draw[->] (0,5) to[bend left] node[midway, above] {$\ell^N_1(w)$} (1.8,5) ; 
\draw[->] (5.7,5) to[bend left]  node[midway, above] {$\ell^N_2(w)$} (12,5); 
\draw[->] (11.1,5) to[bend left]  node[midway, above] {$\ell^N_3(w)$}(13.2,5); 
\draw[->, dotted] (16.5,5) to[bend left]  node[midway, above] {$\ell^N_4(w)$} node[at end, right] {undefined}(19,6); 

\draw[-latex] (-2.5,4) -- (-2.5,0) node[above, at start]{$w$} node[left, midway]{$\delta_N$} node[below, at end] {${\delta}_N(w)$};

\draw[latex-latex] (0,7.5) -- (5.7,7.5) node[midway, above] {$F^N(1)$};
\end{tikzpicture}

\caption{\label{fig:decoupagesuppression} 
Illustration of the definition of the function ${\delta}_N$.}
\end{figure}
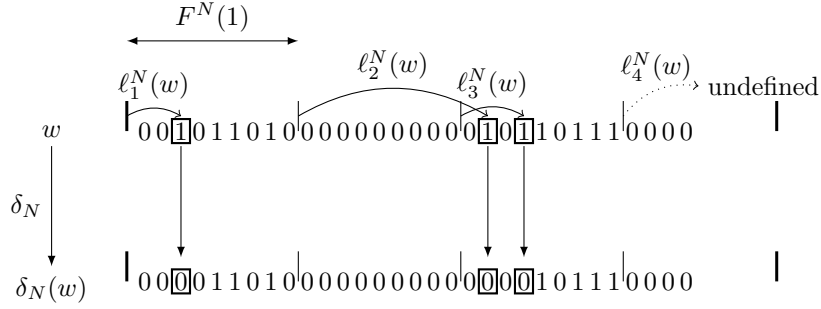
\item[The image of $\mathcal{L}_n (\Sigma)$ is in $\mathcal{L}_n (\Sigma')$ for all $n$]

Take $w \in \L_n(\Sigma)$. We prove that ${\delta}_N (w) \in \L_n (\Sigma')$. Using Lemma~\ref{lem:extendbyzero}, it is sufficient to prove that this word is locally admissible for $\Sigma'$.

By the same argument as Lemma~\ref{lem:extendbyzero}, any fobidden pattern for $\Sigma'$ contains another forbidden pattern ending by a $1$. Therefore we take two arbitrary coordinates $i, i+m-1$ such that $0\leq i<i+m-1\leq n$, assuming that $\delta_N(w)_{i+m-1} = 1$, and we prove that $\delta_N(w)_{[i,i+m-1]}$ is not a forbidden pattern for $\Sigma'$.

Now take any $j$ such that $i\leq jF^N(1) \leq i+m-1$. Since $\delta_N(w)_{i+m-1} = 1$, we have a fortiori $w_{i+m-1} = 1$, which means that $\ell^N_j(w)$ is well-defined and $i\leq \ell^N_j(w) \leq i+m-1$. Since this holds for $\left \lfloor \frac m{F^N(1)} \right \rfloor$ different values of $j$, we have $\#_1(\delta_N(w)_{[i,i+m-1]}) \leq \#_1(w_{[i,i+m-1]}) - \left \lfloor \frac m{F^N(1)} \right \rfloor$ by definition of $\delta_N$.

Since we took $w\in\L_n(\Sigma)$, we have $\#_1(w_{[i,i+m-1]}) \leq p(f,\beta)_m$, and therefore \[\#_1(\delta_N(w)_{[i,i+m-1]}) \leq p(f,\beta)_m - \left \lfloor \frac m{F^N(1)} \right \rfloor \leq p(f,\beta')_m,\] using the first part of the current proof. This means that $\delta_N(w)_{[i,i+m-1]}$ is not a forbidden pattern for $\Sigma'$. Since this is true for all $i$ and $m$, $w$ is locally admissible, and therefore globally admissible, for $\Sigma'$.

\item[Entropy inequality]
  Take any $w\in \L_n (\Sigma')$. From the definition of ${\delta}_N$, a preimage of $w$ by $\delta_N$ must be equal to $w$ except for at most $\left \lfloor \frac n{F^N(1)} \right\rfloor$ coordinates. Therefore $w$ has at most $\binom{n}{\lfloor n/F^N(1) \rfloor}$ different preimages.

  Since we proved $\delta_N(\L_n(\Sigma))\subset \L_n(\Sigma')$ for all $n$, it follows that: \begin{align}\#\L_n(\Sigma') &\leq \binom{n}{\lfloor n/F^N(1) \rfloor}\#\L_n(\Sigma)\nonumber\\
    \frac{\log \#\L_n(\Sigma')}n &\leq \frac{\log \binom{n}{\lfloor n/F^N(1) \rfloor}+\log \#\L_n(\Sigma))}n\label{eq:entropydecrease}\end{align}
  
    By Stirling's formula, $\log(n!) \sim n\log n$. It follows:

    \begin{align*}\log \binom{n}{\lfloor \frac n{F^N(1)} \rfloor} &\sim n\log n - \frac n{F^N(1)}\log \left (\frac n{F^N(1)} \right) - \left(n-\frac n{F^N(1)} \right)\log\left(n-\frac n{F^N(1)}\right)\\
  \begin{split} &\sim -\frac n{F^N(1)}\left (\log \left(\frac n{F^N(1)}\right) - \log n\right ) \\
  &\hspace{3.5cm} - \left(n-\frac n{F^N(1)}\right)\left (\log\left(n-\frac n{F^N(1)} \right)-\log n\right)\end{split}\\
      &\sim n \cdot \left(-\frac 1{F^N(1)} \log\left (\frac 1{F^N(1)}\right) - \left(1-\frac 1{F^N(1)}\right)\log\left(1-\frac 1{F^N(1)}\right)\right)\\
      &\sim nH\left(\frac 1{F^N(1)}\right).
      \end{align*}

Coming back to Equation~(\ref{eq:entropydecrease}), we see that $\h(\Sigma')\leq \h(\Sigma)+H\left(\frac 1{F^N(1)}\right)$, which is the desired statement.

\end{description}
\end{proof}

\subsubsection{Sketch of the proof 
of Theorem~\ref{thm:main}}

The idea of the proof for Theorem~\ref{thm:main} is as follows. Given a $\Pi_1$-computable number $\alpha>0$ and a computable non-decreasing function $f : \N \to\N$, we build an algorithm that computes a non-increasing sequence of rational numbers $(\beta)$ such that:
\begin{itemize}
\item $\Sigma_{f,\beta}$ has entropy $\alpha$;
\item $p(f,\beta)$ satisfies the conditions of Lemma~\ref{lem:abovemixing} are verified (ensuring the $f$-irreducibility).
\end{itemize}
The decidability is ensured by the computability of $\beta$.

Since we only know approximate values $\alpha_n$ of $\alpha$ with no known rate of convergence, we build intermediate subshifts with entropies approximately $\alpha_n$, and use the summability condition on $f$ to ensure that the final subshift has entropy $\alpha$.

\subsubsection{ \label{sec.algo0} 
Description of the algorithm}

Let $\alpha>0$ be a $\Pi_1$-computable real number and $(\alpha_n)_n$ a non-increasing computable sequence of rational numbers such that $\alpha_n \searrow \alpha$.
We define inductively a sequence $(\beta^{\alpha}_n)_{n\in\N}$ such that the associated bounded density subshift $\Sigma_{f,\beta^{\alpha}}$, denoted $\Sigma_{\alpha}$, satisfies $\h(\Sigma_{\alpha}) = \alpha$; this induction can be seen as a recursive algorithm.

Take $n \in \N$ and define $\beta_n$ as follows:

\begin{itemize}
\item $\beta^{\alpha}_0 = 4$ (this ensures that $p_{F(1)}\geq p_1+4$).
\item Assume $n>0$ and $\beta^{\alpha}_0, \dots, \beta^{\alpha}_n$ are defined.
Define the finite sequences $(\eta^m_k)_{k\leq n+1}$ by putting $\eta^m_{n+1} = \beta_n - \frac{m}{F^{n+1}(1)-F^n(1)}$ and $\eta^m_k = \beta^{\alpha}_k$ for all $k \le n$, and define the associated bounded density subshifts $\Sigma^{(n)}_m = \Sigma_{p(f,\eta^m})$. 
Using Lemma~\ref{lem:BoundedDecidable}, compute some values $q_n(m) \in \mathbb{Q}$ for all $m \in \llbracket 0 , F^n(1) \rrbracket$, 
such that \[|h(\Sigma^{(n)}_m)-q_n(m)|\leq 2^{-n}.\]
Finally, define $\beta^{\alpha}_{n+1} = \frac{m^\ast_n}{2^n} \beta^{\alpha}_n$, with 
$m^\ast_n$ the smallest $m$ such that: 
\begin{description}
  \item[Entropy condition] $q_n(m) \geq \alpha_n + 2^{-n}$
  \item[Mixing condition] $ p (f, \eta^m)_{F^{n+1} (1)} \geq 
	2 p (f, (\eta^m) )_{F^{n} (1)} + 4$.
\end{description}
\end{itemize}

See an illustration on Figure~\ref{fig.algo.stepn}.\bigskip

\begin{figure}[htp]
\centering
\begin{tikzpicture}[scale=0.4]
\draw[-latex] (-1,0) -- (20,0); 
\draw[-latex] (0,-1) -- (0,16);
\draw[color=red] (0,0) -- (2,4);
\draw[dashed,color=red] (2,4) -- (6,8);
\draw[color=red] (6,8) -- (10,10);
\node at (8,10) {$\beta_n$};
\draw[dashed] (10,10) -- (10,0);
\node at (10,-2) {$F^{n} (1)$};
\draw (10,10) -- (18,14);
\node at (20,14) {$m=2^n$};
\draw[dashed] (18,14) -- (18,0); 
\draw (10,10) -- (18,11); 
\draw (10,10) -- (18,12); 
\draw (10,10) -- (18,13); 
\node at (20,11) {$m=m^{*}_n$};
\node at (18,-2) {$F^{n+1} (1)$};
\node at (1,-2) {$F^{0} (1)$};
\node at (1,3.5) {$\beta_0$};
\node[color=red] at (-2,5) 
{$\varphi^{\alpha}$};
\end{tikzpicture}

\caption{\label{fig.algo.stepn} Illustration of the definition 
of the algorithm. The number $m^{*}_n$ 
is the smallest one such that the mixing 
condition is verified.}
\end{figure}
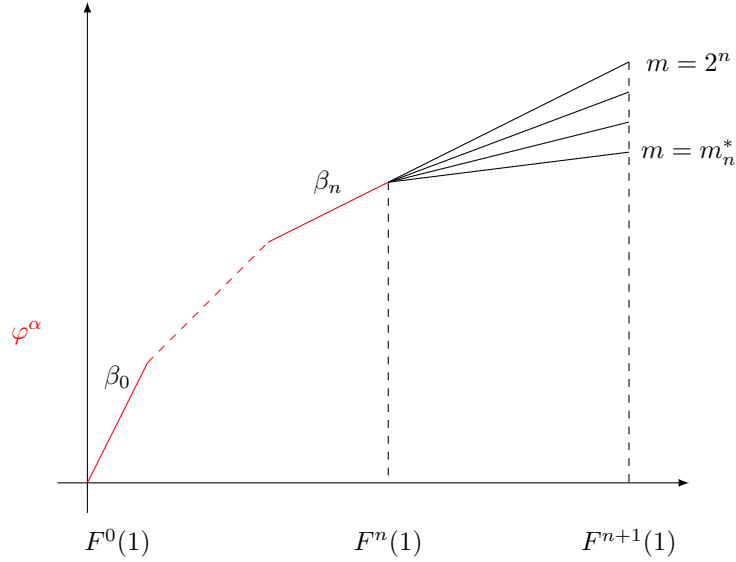

\begin{remark}
A consequence of the entropy condition 
is that for all $n$, $\h(\Sigma_{n,\alpha})\geq 2^{-n}+ \alpha_n$.
\end{remark}

\subsubsection{\label{sec.algo} Proof of 
Theorem~\ref{thm:main} for $d=1$}

In this section, we prove Theorem~\ref{thm:main} for $d=1$ by showing that the subshift we defined in the previous section satisfies the desired properties. Decidability follows from Lemma~\ref{lem:freqDecidable} and $f$-irreducibility from the mixing condition and Lemmas~\ref{lem:abovemixing}. What is left to prove is that its entropy is $\alpha$.

In the following, we use a few shorthands for notation:
\begin{itemize}
\item $\Sigma_{m,\alpha} = \Sigma_{f,(\beta^{\alpha}_k)_{k \le m}}$,
\item $\Sigma_\alpha = \Sigma_{f,\beta^{\alpha}}$,
\item $p^{\alpha} = p(f,\beta^{\alpha})$.
\end{itemize}

\begin{lemma}
\label{lem:aboveentropy}
For any $\Pi_1$-computable real number $\alpha>0$, $\h(\Sigma_{\alpha}) = \alpha$.
\end{lemma}

\begin{proof}[Proof of Lemma~\ref{lem:aboveentropy}]

\begin{description}
\item[Lower bound]

Since ${\mathcal{L}}_n(\Sigma_{\alpha}) = \bigcap_m {\mathcal{L}}_n (\Sigma_{m,\alpha})$, we have that 
\[\h(\Sigma_{\alpha})  = \inf_{n} \frac{\log \#\left({\mathcal{L}}_n(\Sigma_\alpha)\right)}{n} = \inf_{n} \inf_{m} \frac{\log \#\left({\mathcal{L}}_n(\Sigma_{m,\alpha})\right)}{n} = \inf_m \h(\Sigma_{m,\alpha}).\]

Since for all $m$, $\h(\Sigma_{m,\alpha}) \geq \alpha_m + 2^{-m} \geq \alpha$ by the entropy condition of the algorithm, 
\[\h(\Sigma_{\alpha}) \ge \alpha.\]

\item[Upper bound]

For the sake of contradiction, assume that 
\[\h (\Sigma_{\alpha}) > \alpha.\]
\begin{description}
\item[Behavior of the algorithm]
Since $\h(\Sigma_\alpha)=\inf_n \h(\Sigma_{n,\alpha})$, it follows that $\h(\Sigma_{n,\alpha}) - \alpha \geq H(\frac 1n)+2^{-n}$ for all $n$ large enough.

By Lemma~\ref{lem:boundentropy}, this means that in the definition of $\Sigma_{n,\alpha}$, we could have taken $m^\ast_n-1$ instead of $m_n^\ast$ without breaking the entropy condition. The only other possibility is that taking $m^\ast_n-1$ instead of $m_n^\ast$ would have broken the mixing condition. In other words (using the notation from the algorithm):
\begin{equation}
  p(f,\eta^{m_n^\ast-1})_{F^{n+1}(1)} < 2p(f,\eta^{m_n^\ast-1})_{F^{n}(1)}+4.
  \label{eq:mixing}
  \end{equation}
Since $\eta^{m_n^\ast-1}_{n+1} = \eta^{m_n^\ast}_{n+1} - \frac 1{F^{n+1}(1)-F^n(1)}$ and $\eta^{m_n^\ast-1}_k = \eta^{m_n^\ast}_k$ for $k\leq n$, it follows that:
\begin{itemize}
  \item $p(f,\eta^{m_n^\ast-1})_{F^{n}(1)} = p(f,\eta^{m_n^\ast})_{F^{n}(1)}$
  \item $p(f,\eta^{m_n^\ast-1})_{F^{n+1}(1)} = p(f,\eta^{m_n^\ast})_{F^{n+1}(1)}-1$
\end{itemize}
Using these equalities in Equation~\ref{eq:mixing}, we obtain:
\[p(f,\eta^{m_n^\ast})_{F^{n+1}(1)} \leq 2p(f,\eta^{m_n^\ast})_{F^{n}(1)}+3.\]

By definition of $p^\alpha$, this means that for $n$ large enough, 
\[p^\alpha_{F^{n+1}(1)} \leq 2p^\alpha_{F^{n}(1)}+3.\]

\item[Contradiction using the limit density]

By the previous equation, there is a constant $N$ such that for all $n\geq N$, 
\begin{align*}
\frac{p^\alpha_{F^{n+1}(1)}}{F^{n+1}(1)} &\leq \frac{2p^\alpha_{F^{n}(1)}+3}{F^{n+1}(1)}\\
&\leq \frac{p^\alpha_{F^{n}(1)}}{F^{n}(1)}\cdot \frac{2F^n(1)}{F^{n+1}(1)} + \frac{3}{F^{n+1}(1)}\\
&\leq \frac{p^\alpha_{F^{n}(1)}}{F^{n}(1)}\cdot \left(1-\frac{f(F^n(1))}{F^{n+1}(1)}\right) + \frac{3}{F^{n+1}(1)}
\end{align*}

Rewriting this equation,

\[
\frac{p^\alpha_{F^{n}(1)}}{F^{n}(1)} - \frac{p^\alpha_{F^{n+1}(1)}}{F^{n+1}(1)} \geq \left(\inf_{n}
\frac{p_n}{n}\right)\cdot \frac{f(F^n(1))}{F^{n+1}(1)} - \frac{3}{F^{n+1}(1)}
\]

Applying this equation inductively, we get for all $m\geq 0$:
\begin{equation}\label{eq:converge}
\frac{p^\alpha_{F^{n}(1)}}{F^{n}(1)} - \frac{p^\alpha_{F^{n+m}(1)}}{F^{n+m}(1)} \geq \left(\inf_{n}
\frac{p_n}{n}\right)\cdot \sum_{k=n}^{n+m-1}\frac{f(F^{k}(1))}{F^{k+1}(1)} - \sum_{k=n}^{n+m}\frac{3}{F^{k+1}(1)}.
\end{equation}

Since the sequence $(p_n / n)_n$ is bounded, the left-hand side of Equation~(\ref{eq:converge}) is bounded. Since $F(n)\geq 2n$ for all $n$, we have $F^k(1)\geq 2^k$ for all $k$, and $\frac{3}{F^{k+1}(1)} \leq \frac{3}{2^{k+1}}$, so that \[\sum_{k=0}^\infty\frac{3}{F^{k+1}(1)}<\infty,\] and therefore \[\left(\inf_{n}
\frac{p_n}{n}\right)\cdot \sum_{k=n}^{+\infty}\frac{f(F^{k}(1))}{F^{k+1}(1)}<\infty.\]

We prove that $\sum_{k=0}^{m}\frac{f(F^{k}(1))}{F^{k+1}(1)}$ diverges as $m\to\infty$. For all $n$, 
$f(n)\leq (\beta^{\alpha}_0+1) n$. Since $f$ is nondecreasing, we have for all $i$:
\begin{align*}
  \sum_{k=F^{i-1}(1)+1}^{F^i(1)}\frac {f(k)}{k^2}&\leq (F^i(1)-F^{i-1}(1))\cdot\frac{f(F^i(1))}{(F^{i-1}(1))^2}\\& \leq \frac {(F^i(1)-F^{i-1}(1))\cdot F^{i+1}(1)}{(F^{i-1}(1))^2} \cdot \frac{f(F^i(1))}{F^{i+1}(1)}\\ & \leq (
	\beta^{\alpha}_0+2)(
	\beta^{\alpha}_0+3)^2 
	\frac{f(F^i(1))}{F^{i+1}(1)}
\end{align*}
Since $\sum_{k}^\infty\frac{f(k)}{k^2}=+\infty$ by hypothesis, we have $\sum_{i}^\infty \frac {f(F^i(n))}{F^{i+1}(n)}=+\infty$ as well. By considering Equation~\ref{eq:converge} as $m\to\infty$, we see that we must have $\left(\inf_{n}
\frac{p_n}{n}\right) = 0$. By Lemma~\ref{lem:slopetozero}, this implies that $\h (\Sigma_{\alpha})=0$. We have reached a contradiction.
\end{description}

As a consequence, 
\[\h(\Sigma_{p}) = \alpha\]
which is the desired statement.
\end{description}
\end{proof}

The Theorem for $d=1$ follows from Lemma~\ref{lem:freqDecidable} and Lemmas~\ref{lem:abovemixing} and~\ref{lem:aboveentropy}. 

\subsubsection{\label{sec.dimsup} Proof for $d \ge 1$}

In order to obtain the same result in higher dimension, notice that for any one-dimensional subshift $\Sigma$, the subshift
\[\Sigma^d = \{x\in\A^{\Z^d}\ :\ \forall \vec{j} \in\Z^{d-1},\ (x_{i,\vec{j}})_{i\in\Z} \in \Sigma\}\]
has the same entropy and decidability properties as $\Sigma$. We prove that if $\Sigma$ is $f$-irreducible, then $\Sigma^d$ is also $f$-irreducible.\bigskip

Indeed, let $\mathbb{U},\mathbb{V}$ two finite subsets of $\Z^d$ such that $d(\mathbb{U},\mathbb{V}) \ge f(\max(\delta(\mathbb{U}),\delta(\mathbb{V})))$ and $u,v$ two patterns on $\mathbb{U},\mathbb{V}$ 
respectively.
For all $\vec{k} \in \Z^{d-1}$, denote $\mathcal H_{\vec{k}} =\{(i,\vec{k}) : i \in \Z \}$. 
Consider the sets $\mathbb{U}_{\vec{k}} =\mathbb U\cap \mathcal H_k$ and $\mathbb{V}_{\vec{k}} =\mathbb V\cap \mathcal H_k$, and put $u_k = u|_{\mathbb{U}_{\vec{k}}} , v_k=v|_{\mathbb{V}_{\vec{k}}}$.

Since the function $f$ is non-increasing, that $d (\mathbb{U}_{\vec{k}}, \mathbb{V}_{\vec{k}}) \ge \delta (\mathbb{U},\mathbb{V})$ and that furthermore $\max(\delta(\mathbb{U}),\delta(\mathbb{V})) \ge \max(\delta(\mathbb{U}_{\vec{k}}), \delta(\mathbb{V}_{\vec{k}}))$, we have:
\[d  (\mathbb{U}_{\vec{k}},\mathbb{V}_{\vec{k}}) \ge f(\max(\delta(\mathbb{U}_{\vec{k}}), \delta(\mathbb{V}_{\vec{k}}))).\]

By $f$-irreducibility of $\Sigma$, this implies that there exists some $x_k \in \Sigma$ whose restrictions on $\mathbb{U}_{\vec{k}}$ and $\mathbb{V}_{\vec{k}}$ are $u_k$ and $v_k$, respectively.

Let $x$ be the configuration defined by $x|_{\mathcal H_k} =x_{\vec{k}}$ for all $\vec{k} \in \Z^{d-1}$. Then $x\in\Sigma$ and $x|_\mathbb{U} = u$ and $x|_\mathbb{V} = v$ by construction.

\section{Conclusion} 

Our main result is the proof of a jump in the difficulty of computing entropy of decidable subshifts when a measure of mixing strength, the irreducibility rate, passes a certain threshold. We offer some perspectives for further research:
\begin{itemize}
\item We do not have a characterisation of real numbers that can be reached as entropies of decidable subshifts whose irreducibility rate is under the threshold. We conjecture that all computable real numbers can be reached in this way.
\item The main question, and the initial motivation of this work, is whether the same threshold marks the jump between computable and uncomputable entropy for subshifts of finite type of higher dimension. 
\end{itemize}

\section*{Acknoledgments}

The authors are grateful to Ronnie Pavlov for many useful discussions and suggestions.

\bibliographystyle{plain}
\bibliography{Biblio}

\end{document}